\documentclass[11pt]{amsart}
\usepackage{epsfig, graphics, psfrag, color}
\usepackage{amsmath, amsfonts, amscd, amssymb, graphicx, mathrsfs,
  url, setspace}

\textwidth 6.07in 
\textheight 8.63in 
\oddsidemargin 0.18in
\evensidemargin 0.18in

%
\title{Geodesic systems of tunnels in hyperbolic 3--manifolds}

\author{Stephan D. Burton}
\address{Department of Mathematics, Michigan State University, East
Lansing, MI 48824, USA}
\email{burtons8@msu.edu}

\author{Jessica S. Purcell}
\address{ Department of Mathematics, Brigham Young University,
Provo, UT 84602, USA}
\email{jpurcell@math.byu.edu}



%


\newcommand{\bdy}{{\partial}} 

\renewcommand{\inf}[1]{{\mathrm{inf} \left\{ #1 \right\} }}

\renewcommand{\phi}{{\varphi}}

\newcommand{\HH}{{\mathbb{H}}}
\newcommand{\RR}{{\mathbb{R}}}
\newcommand{\ZZ}{{\mathbb{Z}}}

\newcommand{\CC}{{\mathbb{C}}}
\newcommand{\PSL}{{\mathrm{PSL}}}

\numberwithin{equation}{section}

\newcommand{\DD}{\mathbf{D}}
\newcommand{\inv}{^{-1}}

\def\co{\colon\thinspace}

\newcommand{\F}{{\mathcal{F}}}

\theoremstyle{plain}
\newtheorem{theorem}{Theorem}[section]

\newtheorem{lemma}[theorem]{Lemma}
\newtheorem{prop}[theorem]{Proposition}

\newtheorem{question}[theorem]{Question}

\newtheorem*{namedtheorem}{\theoremname}
\newcommand{\theoremname}{testing}

\theoremstyle{definition}
\newtheorem{define}[theorem]{Definition}
\newtheorem{definition}[theorem]{Definition}

\newtheorem{example}[theorem]{Example}

%


\begin{document}

\begin{abstract}
It is unknown whether an unknotting tunnel is always isotopic to a
geodesic in a finite volume hyperbolic 3--manifold.  In this paper,
we address the generalization of this question to hyperbolic
3--manifolds admitting tunnel systems.  We show that there exist
finite volume hyperbolic 3--manifolds with a single cusp, with a
system of $n$ tunnels, $n-1$ of which come arbitrarily close to
self--intersecting.  This gives evidence that systems of unknotting
tunnels may not be isotopic to geodesics in tunnel number $n$
manifolds.  In order to show this result, we prove there is a
geometrically finite hyperbolic structure on a $(1;n)$--compression
body with a system of $n$ core tunnels, $n-1$ of which
self--intersect.  
\end{abstract}

\maketitle


\newcommand{\mat}[2][cccc]{\left(\begin{array}{#1} #2\\
	\end{array}\right)}

\section{Introduction}
One major task in the study of 3--manifolds is to relate topological
invariants to geometric ones, for example, to identify arcs with a
topological description as geodesics in a hyperbolic manifold.  One
particular class of arcs that has earned interest is that of
unknotting tunnels.

An \emph{unknotting tunnel} in a 3--manifold $M$ with torus boundary
is an embedded arc with endpoints on $\partial M$ whose complement is
homeomorphic to a handlebody.  A manifold that admits a single
unknotting tunnel (and is not a solid torus) is called a \emph{tunnel
  number 1} manifold.  A \emph{system of unknotting tunnels} for a
3--manifold $M$ with torus boundary is a collection of arcs $\tau_1,
\dots, \tau_n$, each with endpoints on $\partial M$, such that
$M\setminus (\bigcup_{i=1}^n N(\tau_i))$ is a handlebody, where
$N(\cdot)$ denotes a regular neighborhood.  Manifolds that admit a
tunnel system of $n$ arcs, but not a tunnel system of $(n-1)$ arcs are
called \emph{tunnel number $n$}.  Recall that every 3--manifold with
torus boundary is tunnel number $n$ for some $n$, because the tunnel
number of the manifold encodes the genus of a minimal genus Heegaard
splitting, and every 3--manifold admits a Heegaard splitting.

Now, unknotting tunnels are defined by topology; they are described by
embedded arcs and homeomorphisms.  Adams was the first to investigate
their geometry in the case that the 3--manifold is hyperbolic
\cite{Adams1}.  He showed that when $M$ is a tunnel number 1 manifold
with two boundary components, then an unknotting tunnel will always be
isotopic to a geodesic.  He asked if this is true for more general
tunnel number 1 manifolds.  Soon after, Adams and Reid showed that an
unknotting tunnel in a 2--bridge knot complement is always isotopic to
a geodesic \cite{AdamsReid}.  Recently, Cooper, Futer, and Purcell
\cite{CooperFuterPurcell} showed that in an appropriate sense,
an unknotting tunnel in a tunnel number 1 manifold is isotopic to a
geodesic generically.

In this paper, we investigate the generalization of Adams' question to
systems of unknotting tunnels, or tunnel systems, in tunnel number $n$
manifolds, and give evidence that in this setting, the answer to the
question is no.  That is, we show that there are tunnel number $n$
manifolds, and a system of $n$ tunnels, such that $(n-1)$ of those
tunnels are homotopic to geodesics arbitrarily close to having
self--intersections.  This is the content of Theorem
\ref{thm:maintheorem}, below.  Because the geodesics homotopic to
these tunnels come within distance $\epsilon$ of self--intersecting,
they either must pass through themselves in an attempted isotopy to
the tunnel system, or they lie within distance $\epsilon$ of homotopic
arcs which pass through themselves under the natural homotopy to the
tunnel system.  Thus it seems unlikely that all such tunnels will be
isotopic to geodesics.  Hence this result gives evidence that not all
tunnels of all possible tunnel systems for a tunnel number $n$
manifold will be isotopic to geodesics.  See below for further
discussion.

In order to understand the geometry of tunnel number $n$ manifolds, we
study the geometry of compression bodies with genus 1 negative
boundary, and genus $(n+1)$ positive boundary.  We denote these
compression bodies as \emph{(1; n+1)--compression bodies}.  Notice
that any tunnel number $n$ manifold with a single torus boundary
component is obtained by attaching a genus $(n+1)$ handlebody to a
$(1; n+1)$--compression body along their common genus $(n+1)$
boundaries.  A system of unknotting tunnels in the resulting manifold
will consist of a system of arcs in the $(1; n+1)$--compression body,
which we call \emph{core tunnels}.  In the case of the
$(1;2)$--compression body, Lackenby and Purcell investigated the
natural extension of Adams' question to geometrically finite
hyperbolic structures on such compression bodies
\cite{LackenbyPurcell}.  They conjectured that in the
$(1;2)$--compression body, core tunnels are always isotopic to
geodesics.

Another main result of this paper is that the natural generalization
of Lackenby and Purcell's conjecture to $(1; 1+n)$--compression bodies
is false.

\begin{theorem}\label{thm:main-compress}
There exist geometrically finite hyperbolic structures on the
$(1;~n+1)$--compression body, for $n\geq 2$, for which $(n-1)$ of the
$n$ core tunnels are homotopic to self--intersecting geodesics.  Hence
these tunnels cannot be isotopic to simple geodesics.
\end{theorem}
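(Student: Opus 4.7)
The plan is to construct, for each $n \geq 2$, an explicit discrete, faithful, geometrically finite representation $\rho \co \pi_1(C) \cong \ZZ^2 \ast F_n \to \PSL(2,\CC)$ of the fundamental group of the $(1;n+1)$-compression body $C$, in which the geodesic representatives of $n-1$ of the $n$ core tunnels self-intersect. Working in the upper half space model of $\HH^3$, I would take $\rho(\ZZ^2) = P$ to be a rank-$2$ parabolic subgroup fixing $\infty$ with short translation vectors $v_1, v_2$, and take $\rho(g_1), \ldots, \rho(g_n)$ loxodromic, where $g_1, \ldots, g_n$ are free generators of $F_n$. The $i$-th core tunnel $\tau_i$ is an arc with endpoints free on the torus cusp and represents the double coset $Pg_iP$; its geodesic representative accordingly lifts to the common perpendicular in $\HH^3$ between the maximal cusp horoball $B_\infty$ about $\infty$ and its translate $g_iB_\infty$, which in the upper half space is the vertical geodesic ray above $g_i(\infty)\in\CC$.

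The tunnel $\tau_i$ is homotopic to a self-intersecting geodesic exactly when some translate $h\cdot\tau_i$, for $h \in \Gamma \setminus Pg_i^{\pm 1}P$, meets $\tau_i$ transversely in $\HH^3$. Translates by $P$ alone are parallel vertical rays and cannot cross, so the witness $h$ must move $\infty$ to a finite point; the most efficient choice is $h = g_n$ for a designated ``good'' generator, making $g_n \cdot \tau_i$ a Euclidean semicircle in a vertical plane connecting the tops of the horoballs about $g_n(\infty)$ and $g_n g_i(\infty)$. For each $i = 1, \ldots, n-1$, I would choose $\rho(g_i)$ so that (a) $g_i(\infty)$ lies on the Euclidean segment from $g_n(\infty)$ to $g_n g_i(\infty)$, and (b) the height of the semicircle $g_n \cdot \tau_i$ above $g_i(\infty)$ exceeds the top of $g_i B_\infty$ and lies below $\partial B_\infty$. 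Under these conditions, $g_n \cdot \tau_i$ crosses $\tau_i$ transversely, producing a self-intersection of $\tau_i$ in the quotient. Since the geodesic representative of a homotopy class is unique up to parametrization, no geodesic freely homotopic to $\tau_i$ can be simple, which yields the final conclusion of the theorem.

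The principal obstacle is compatibility: conditions (a) and (b) constrain $g_i$ to place $g_i B_\infty$ in a very specific configuration relative to $g_n B_\infty$ and $g_n g_i B_\infty$, while the Poincar\'e polyhedron theorem requires the isometric spheres of $g_1^{\pm 1}, \ldots, g_n^{\pm 1}$ to be pairwise disjoint and to lie beneath $B_\infty$ in order that $\rho$ be discrete with quotient of the correct topology. I expect to resolve this tension by taking the translation vectors $v_1, v_2$ sufficiently short: this enlarges $B_\infty$ and shrinks every loxodromic horoball $g_i B_\infty$ to Euclidean-small radius, simultaneously creating room in $\CC$ to separate the isometric spheres and lowering the obstacle that the semicircle $g_n \cdot \tau_i$ must clear. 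The good generator $g_n$ is chosen generically (no special alignment) so that no analogous witness forces $\tau_n$ to self-intersect. The final step is to verify discreteness, faithfulness, geometric finiteness, and the correct quotient topology via a standard Klein--Maskit combination / Ford domain argument, building $\Gamma$ from $P$ by successively adjoining the loxodromic generators with these explicitly positioned isometric spheres.
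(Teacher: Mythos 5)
Your overall strategy---exhibit two distinct lifts of the tunnel geodesic, one a vertical ray and one a Euclidean semicircle obtained by applying a loxodromic element, and position them so they cross---is exactly the mechanism the paper uses, and your endgame (a self--intersecting geodesic representative cannot be isotopic to a simple geodesic) is fine. But the ``compatibility'' obstacle you flag at the end is not a technicality to be absorbed by shrinking the parabolic translation vectors; it is a genuine obstruction that kills the construction as you have set it up. The element $g_n$ maps the exterior of the half--ball bounded by $I(g_n)$ into the open half--ball bounded by $I(g_n^{-1})$. So if $g_i(\infty)$ lies outside the isometric disk of $g_n$ (as it must if the spheres $I(g_1^{\pm 1}),\dots,I(g_n^{\pm 1})$ are to be pairwise disjoint), then $g_n g_i(\infty)$ lies \emph{inside} the disk of radius $r_n$ about $g_n(\infty)$, hence the entire Euclidean segment from $g_n(\infty)$ to $g_n g_i(\infty)$ lies inside that disk. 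Your condition (a) then forces $g_i(\infty)$, the center of $I(g_i^{-1})$, into the isometric disk of $I(g_n^{-1})$, so these two spheres cannot be disjoint. This incompatibility is scale--invariant, so shortening $v_1, v_2$ does not help---and in fact works against you, since by Shimizu--Leutbecher the isometric sphere radii are bounded above by the minimal parabolic translation length, so shrinking $v_1,v_2$ shrinks every sphere and eventually destroys discreteness rather than ``creating room.''

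The way out, which is the route the paper takes, is to decouple the generating set used to certify discreteness from the generating set defining the tunnel system. One first builds the structure with generators $\gamma_1,\dots,\gamma_n$ whose isometric spheres are pairwise disjoint and visible, so the Poincar\'e polyhedron theorem gives discreteness, geometric finiteness, and the correct quotient topology. One then passes to a different free basis $\delta_k = \gamma_k^{-1}\gamma_n$ ($k<n$), $\delta_n=\gamma_n$, which is still standard for a tunnel system (the two spines differ by disk slides), but whose isometric spheres are \emph{invisible}: $I(\rho(\delta_k))$ is contained in the half--ball bounded by $I(\rho(\gamma_n))$. Precisely because of that containment, the dual geodesic to $I(\rho(\delta_k))$ decomposes in the Ford domain into several arcs---a vertical ray and a long semicircle joining the centers of $I(\rho(\gamma_k^{-1}))$ and $I(\rho(\gamma_n^{-1}))$---and a one--parameter deformation of $\rho(\gamma_k)$ together with an intermediate value theorem argument forces these two arcs to cross at a point well outside the cusp horoball and outside all half--balls. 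Your proposal is missing this change--of--basis step; without it, the crossing configuration and the Ford--domain/Klein--Maskit verification cannot coexist.
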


Theorem \ref{thm:main-compress} is obtained by studying Ford domains
of geometrically finite structures on such compression bodies.  Ford
domains have proven useful for the study of geometrically finite
structures on manifolds in the past.  For example, J{\o}rgensen
studied Ford domains of once punctured torus groups
\cite{JorgensenPuncture} and cyclic groups \cite{JorgensenCyclic}.
Akiyoshi, Sakuma, Wada, and Yamashita extended J{\o}rgensen's work
\cite{Aswy}, and Wada \cite{Wada} developed an algorithm to determine
Ford domains of these manifolds.  Lackenby and Purcell also studied
Ford domains on $(1;2)$--compression bodies \cite{LackenbyPurcell},
and Ford domains play a role in identifying long tunnels in the work
of Cooper, Lackenby, and Purcell \cite{CLP:LengthPaper}.

Using the Ford domains for geometrically finite hyperbolic structures
on $(1; n+1)$ compression bodies, as well as geometric techniques to
attach handlebodies to such structures as in \cite{CLP:LengthPaper},
we show the following.

\begin{theorem}\label{thm:maintheorem}
For any $\epsilon>0$, there exist finite volume one--cusped hyperbolic
manifolds with a system of $n$ tunnels for which $(n-1)$ of the
tunnels are homotopic to geodesics which come within distance
$\epsilon$ of self--intersecting.
\end{theorem}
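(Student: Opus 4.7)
The plan is to build the manifolds of Theorem \ref{thm:maintheorem} by attaching a genus $n+1$ handlebody to the geometrically finite $(1;n+1)$--compression body provided by Theorem \ref{thm:main-compress}, using the geometric handlebody--gluing techniques of \cite{CLP:LengthPaper} so that the self--intersecting core tunnels of the compression body survive, only slightly perturbed, in the resulting finite volume manifold.

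First, I would invoke Theorem \ref{thm:main-compress} to fix a geometrically finite hyperbolic structure $N$ on the $(1;n+1)$--compression body $C$ in which $n-1$ of the $n$ core tunnels $\tau_1,\dots,\tau_{n-1}$ are freely homotopic to closed geodesics $\gamma_1,\dots,\gamma_{n-1}$ that actually self--intersect. I would then fix a compact submanifold $K\subset N$ containing the convex core of $N$ together with a neighborhood of the self--intersection points of the $\gamma_i$. Any hyperbolic 3--manifold whose geometry is bi--Lipschitz close to $N$ on $K$ will then contain closed geodesics in the homotopy classes of the $\tau_i$ whose points of closest self--approach lie within any prescribed $\epsilon$, since self--intersection is a compactly supported, open condition.

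Second, I would realize $C$ as one side of a Heegaard splitting of a one--cusped finite volume 3--manifold by attaching a genus $n+1$ handlebody $H$ to $C$ along $\partial_+ C$. Following the scheme of \cite{CLP:LengthPaper}, I would equip $H$ with a convex cocompact hyperbolic structure whose conformal boundary on $\partial H$ matches the conformal boundary of $N$ on $\partial_+ C$ under a suitable identification, and combine the two Kleinian groups via a Klein--Maskit--style combination argument (or, equivalently, realize the construction as a limit of hyperbolic Dehn fillings produced by Thurston's hyperbolization theorem). The combined group uniformizes a one--cusped finite volume hyperbolic manifold $M$ whose Heegaard splitting is $C\cup_\Sigma H$; in particular $M$ inherits the tunnel system $\{\tau_1,\dots,\tau_n\}$ from $C$.

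Finally, to guarantee that the geodesic representatives of $\tau_1,\dots,\tau_{n-1}$ in $M$ come within $\epsilon$ of self--intersecting, I would push the handlebody data deep into Teichm\"uller space so that the Ford domain of $M$, restricted to a neighborhood of $K$, is $(1+\delta)$--bi--Lipschitz to that of $N$ on $K$, with $\delta \to 0$. Geometric convergence on $K$ then forces the closed geodesic representatives of the $\tau_i$ in $M$ to converge to the self--intersecting $\gamma_i$ of $N$, so for data sufficiently deep the tunnel geodesics in $M$ are within $\epsilon$ of self--intersecting. The main obstacle will be this last step: converting the explicit Ford domain description of $N$ furnished by Theorem \ref{thm:main-compress} into quantitative control on the Ford domain of $M$ near the near self--intersection points, which is exactly the kind of estimate that the Ford domain machinery developed in this paper and in \cite{CLP:LengthPaper} is engineered to support.
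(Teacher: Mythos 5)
There is a genuine gap at the heart of your second step: you cannot attach the handlebody to the geometrically finite structure $N$ of Theorem \ref{thm:main-compress} by ``matching conformal boundaries'' or by a Klein--Maskit combination. The conformal boundary lives at infinity, not in the manifold, so there is nothing to glue isometrically; and Maskit combination builds amalgamated products or HNN extensions of Kleinian groups, whereas attaching $H$ to $C$ along $\partial_+C$ \emph{kills} the surface group --- $\pi_1(C\cup_\Sigma H)$ is a proper quotient of $\pi_1(C)$, not an amalgam --- so no combination theorem applies. The structures produced by Theorem \ref{thm:main-compress} have quasi-conformal (flexible) convex core boundary, and there is no direct geometric operation that caps them off with a handlebody while controlling the metric near the self-intersection points. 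This is exactly the obstruction the paper's argument is designed to circumvent, and your proposal does not supply a substitute.

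The paper's route is: first deform the self-intersecting structure to the boundary of $GF_0(C)$ and use density of maximal cusps there (\cite{cchs}) to obtain a \emph{maximally cusped} structure on $C$ whose relevant dual geodesics come within $\epsilon$ of self-intersecting (Lemma \ref{lemma:MaximallyCusped}) --- note that actual self-intersection can already be lost at this approximation stage, which is why the theorem is stated with an $\epsilon$; your claim that self-intersection is an open condition preserved under small perturbation is true for a fixed transverse intersection but the approximating representations are not small perturbations of $\rho_0$ itself. In a maximally cusped structure the convex core boundary is a union of totally geodesic thrice-punctured spheres, each with a unique hyperbolic structure, so one can genuinely glue by isometry to a maximally cusped handlebody pinched along the same pants decomposition. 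This yields a finite-volume manifold with $3n+1$ cusps, not one; the final step is to Dehn fill the $3n$ pants cusps along slopes $\mu_j+k_j\lambda_j$ (which preserves the Heegaard surface, Lemma \ref{lemma:DehnTwist}) and invoke Thurston's Dehn surgery together with the Brock--Bromberg bilipschitz control to keep the tunnels within $\epsilon$ of self-intersecting. Your proposal is missing the passage to maximal cusps, the thrice-punctured-sphere gluing, and the Dehn filling, and each is essential. (A minor point: the tunnels are arcs running into the rank-two cusp, so their geodesic representatives are geodesic arcs, not closed geodesics.)
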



The proof of this theorem does not guarantee that the geodesics will
self--intersect.  However, the proof involves constructing a sequence
of hyperbolic manifolds with geodesics that are close to
self--intersecting.  In particular, we start with a self--intersecting
geodesic and modify the geometry slightly to obtain the new hyperbolic
manifold.  If the geodesic does not remain self--intersecting under
the geometric modification, then it will move in one of two
directions, only one of which is in the direction of isotopy of the
tunnel.  In the other direction, the obvious homotopy to the
unknotting tunnel passes through the point of self--intersection, and
so is not an isotopy.  (In fact, there may still be a non-obvious
isotopy even in this case, but the homotopy moving the arc the
shortest distance in an $\epsilon$--ball about the nearest points on
the geodesic will pass through the geodesic, so it will not be an
isotopy.)  In any case, the geodesic in the homotopy class of the
tunnel lies within distance $\epsilon$ of an arc which must pass
through a self--intersection in a natural homotopy to the unknotting
tunnel.  Hence we say these tunnels are ``within $\epsilon$'' of not
being isotopic to geodesics.  This gives evidence that these tunnels
are not isotopic to a geodesic, although not a proof of the fact.
Moreover, as there are many choices involved in the proof of Theorem
\ref{thm:maintheorem}, it is plausible that some choice will produce a
hyperbolic manifold with a tunnel system which may have to pass
through itself when homotoped to a nearly self--intersecting geodesic.
Consequently, it is likely that there are finite volume tunnel number
$n$ manifolds for which $(n-1)$ of the tunnels are not isotopic to a
geodesic.

Finally, we note that the proof of this theorem relies upon a specific
choice of the spine of a compression body $C$.  However, there are
countably many choices for any tunnel system for tunnel number $n$
manifolds, provided $n\geq 2$.  In fact, we will see below that our
choice of tunnel systems for Theorem \ref{thm:maintheorem} is not a
natural choice for the geometric structure we start with.  In each of
our examples, there is a more obvious choice of tunnel systems from a
geometric point of view, which leads to a geodesic tunnel system.
Therefore, we ask the following.

\begin{question}\label{quest:tunnelsystems}
For any finite volume tunnel number $n$ manifold with a single cusp,
is there a choice of a system of unknotting tunnels for the manifold
such that each tunnel is isotopic to a geodesic?
\end{question}

\subsection{Acknowledgments}
We thank David Futer, Yair Minsky, Yoav Moriah, and Saul Schleimer for
helpful conversations.  We particularly thank Schleimer for explaining
to us the handlebody complex and its application in
Section \ref{sec:TunnelNumbern}.  Purcell was supported by NSF grant
DMS--1007437 and the Alfred P.~Sloan Foundation.

\section{Background}
In this section, we define notation and review background material on
Ford domains of compression bodies.  We also prove a few lemmas that
will be important later in the paper.  

\subsection{The topology of compression bodies}
Here we review topological facts concerning compression bodies.  The
details are standard, and may be found, for example, in Scharlemann's
survey article \cite{Scharlemann:survey}.  Complete details on many of
the results may be found, among other places, in
\cite{burton:thesis2012}.

\begin{define}\label{def:comprbody}
Let $S$ be a (possibly disconnected) closed, orientable surface with
genus at least $1$.  A \emph{compression body} is the result of
attaching a finite collection of 1--handles to $S\times [0,1]$ on the
boundary component $S \times \{ 1\}$, in a piecewise linear manner; we
require that our compression bodies be connected.

If $C$ is a compression body, the \emph{negative boundary} $\bdy_- C$
is $S\times \{0\}$.  The \emph{positive boundary} $\bdy_+ C$ is $\bdy
C \setminus \bdy_- C$.

Note that $\bdy_- C$ will consist of a disjoint union of surfaces, of
genus $m_1, \dots, m_k$, and $\bdy_+ C$ will be a genus $n$ surface
with $n \geq \sum m_i $.  We will call such a compression body an
\emph{$(m_1, \dots, m_k ;n)$--compression body}.  
\end{define}

In this paper, we will only consider examples with $S$ connected,
hence we are interested in $(m;n)$--compression bodies, with $n \geq
m$.  Usually we will set $m=1$.

Any two $(m;n)$--compression bodies are homeomorphic
\cite{burton:thesis2012}.  Hence, for fixed $m,n$, we will usually
refer to \emph{the} $(m;n)$--compression body.

\begin{define}\label{def:disksys}
A \emph{system of disks} for a compression body $C$ is a collection
of properly embedded essential disks $\{D_1, \dots, D_n\}$ such that
the complement of a regular neighborhood of $\bigcup_{i=1}^n D_i$ in
$C$ is homeomorphic to the disjoint union of a collection of balls and
the manifold $\bdy_- C \times [0,1]$.  A system of disks is
\emph{minimal} if the complement of a regular neighborhood of
$\bigcup_{i=1}^n D_i$ in $C$ is homeomorphic to $\bdy_- C \times
     [0,1]$.
\end{define}

Each $(m;n)$--compression body admits a minimal system of disks, and
such a system of disks will contain exactly $n-m$ disks
\cite{burton:thesis2012}.  In fact, provided $n-m \geq 2$, the
$(m;n)$--compression body will actually admit countably many systems
of disks, each related by some sequence of \emph{disk slides}, as in
the following definition.

\begin{define}\label{def:diskslide}
Let $C$ be an $(m;n)$-compression body with $n-m \geq 2$, and let $\DD
= \{D_1, \dots, D_{n-m}\}$ be a minimal system of disks for $C$.  Let
$N$ be a regular neighborhood of $\DD$.  Then $C \setminus N$ is
homeomorphic to $\partial_-C \times [0,1]$, with the (positive)
boundary component $\bdy_-C \times \{1\}$ containing pairs of disks,
denoted $E_i$ and $E_i'$, which are parallel to $D_i$.

Let $\alpha$ be an arc in $\partial_-C \times \{1\}$, with one
endpoint on one disk, say $E_i$, and the other endpoint on another
disk, say $E_j$, and otherwise disjoint from all the disks $E_k \cup
E_k'$.  Let $N'$ be a regular neighborhood in $C$ of $E_i \cup \alpha
\cup E_j$. Then $\overline{N}'$ is a closed ball which intersects
$\partial_+ C$ in a thrice--punctured sphere. The set $\partial N'
\backslash \partial C$ consists of three disks: one parallel to $D_i$,
one parallel to $D_j$, and another disk $D_i *_{\alpha} D_j$. Let
$\mathbf{D}' = \{D_1, \hdots, \widehat{D}_i, \hdots, D_n, D_i
*_{\alpha} D_j\}$, where as usual $\widehat{D}_i$ means remove $D_i$
from the collection.  Then $\DD'$ is also a minimal system of disks.
It is said to be a \emph{disk slide} of $\DD$.  See
Figure~\ref{fig:DiskSlide}.
\end{define}

\begin{figure}
\begin{center}
  \input{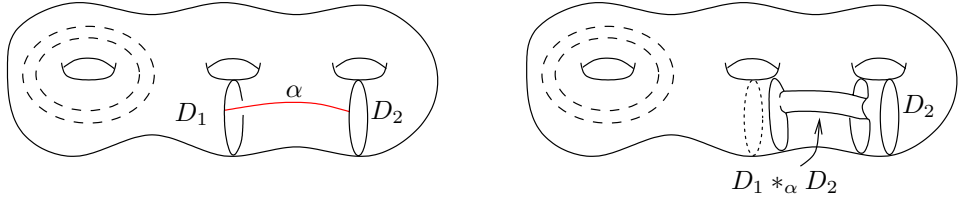}
\caption{A disk slide in a $(1;3)$--compression body.}
\label{fig:DiskSlide}
\end{center}
\end{figure}

Associated to a minimal system of disks for $C$ is a system of arcs, as
follows.  

\begin{define}\label{def:spine}
Let $K$ be a graph embedded in a compression body $C$ whose only
vertices are valence one, embedded in $\bdy_- C$.  If $C$ deformation
retracts to $\bdy_- C \cup K$, then we say $K$ is a \emph{spine} for
$C$.  A spine $K$ is \emph{dual} to a minimal system of disks $\DD$ if
each edge of $K$ intersects a single disk of $\DD$ exactly once, and
each disk in $\DD$ intersects an edge of $K$.
\end{define}

Given any minimal system of disks $\DD$ for a compression body $C$,
there is always a spine dual to $\DD$, and the spine is unique up to
isotopy \cite{burton:thesis2012}.

\begin{define}\label{def:core-tunnel}
Let $C$ be a compression body, and let $K$ be a spine dual to a
minimal system of disks $\DD$ for $C$.  The edges of $K$ are arcs
running from $\bdy_-C$ to $\bdy_-C$.  We call such an arc a \emph{core
  tunnel} for $C$, and we say the spine $K$ is a \emph{core tunnel
  system}, or simply a \emph{tunnel system}, for $C$.
\end{define}

Just as there are countably many minimal systems of disks $\DD$ for a
compression body $C$, there are also countably many tunnel systems.
However, we will work frequently with one particular system, given by
the following lemma.

\begin{lemma}\label{lemma:cores}
Recall that the $(m;n)$--compression body $C$ is obtained by attaching
$(n-m)$ 1--handles to the $S\times \{1\}$ component of $S\times I$,
where $S$ is a genus--$m$ surface.  For each $i=1, \dots, n-m$, let
$e_i$ be an edge at the core of the corresponding 1--handle, extended
vertically through $S\times [0,1]$ to have boundary on $S\times
\{0\}$.  Then $\bigcup_{i=1}^{n-m} e_i$ forms a tunnel system for $C$,
and each $e_i$ is a core tunnel.
\end{lemma}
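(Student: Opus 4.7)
The plan is to verify directly the two conditions in Definitions \ref{def:spine} and \ref{def:core-tunnel}: that $K = \bigcup_{i=1}^{n-m} e_i$ is a spine for $C$, and that $K$ is dual to some minimal system of disks $\DD = \{D_1,\dots,D_{n-m}\}$.

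First I would exhibit the deformation retraction of $C$ onto $\bdy_- C \cup K$. Write each 1-handle as $H_i \cong D^2 \times [0,1]$, attached to $S \times \{1\}$ along $D^2 \times \{0,1\}$, so that $e_i$ meets $H_i$ in $\{0\} \times [0,1]$ and is extended vertically in $S \times [0,1]$. The obvious straight-line retraction of $D^2$ to its center, performed in each $H_i$, deformation retracts each 1-handle onto its core. Then collapsing $S \times [0,1]$ vertically onto $S \times \{0\}$ (while keeping the two vertical extensions of each $e_i$ invariant as sets) gives a deformation retraction onto $S \times \{0\} \cup K = \bdy_- C \cup K$. This shows $K$ is a spine.

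Next I would produce the dual disk system: let $D_i \subset H_i$ be the cocore disk $D^2 \times \{1/2\}$. Each $D_i$ is a properly embedded disk in $C$ which is essential (its boundary lies on $\bdy_+ C$ and is non-trivial there, since compressing along $D_i$ reduces the genus of $\bdy_+C$). By construction $e_i \cap D_i$ is the single point $(0,1/2) \in H_i$, and $e_j \cap D_i = \emptyset$ for $j \neq i$, so the duality condition of Definition \ref{def:spine} will hold as soon as we know $\DD = \{D_1,\dots,D_{n-m}\}$ is a minimal system of disks.

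To check minimality, cut $C$ along a regular neighborhood $N(\DD)$. Inside each $H_i$, removing a neighborhood $D^2 \times (1/2-\e, 1/2+\e)$ of $D_i$ splits $H_i$ into two 3-balls $D^2 \times [0,1/2-\e]$ and $D^2 \times [1/2+\e,1]$, each attached to $S \times [0,1]$ along a disk in $S \times \{1\}$. Attaching a 3-ball to $S \times [0,1]$ along a disk in its boundary yields a manifold homeomorphic to $S \times [0,1]$ (the attached ball can be absorbed into the collar by pushing $S \times \{1\}$ outward across the disk). Performing this absorption for all $2(n-m)$ attached balls gives $C \setminus N(\DD) \cong S \times [0,1] = \bdy_-C \times [0,1]$, so $\DD$ is minimal by Definition \ref{def:disksys}.

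There is no real obstacle here; the only point that requires a touch of care is the absorption step in the last paragraph, since one must verify that the attaching disks sit in a collar of $S \times \{1\}$ in a product-compatible way. This follows because the feet of the 1-handles $H_i$ were originally disjoint disks on $S \times \{1\}$, so the $2(n-m)$ attaching disks in the cut-open manifold are disjoint, and each can be absorbed independently via a collar isotopy. Combining the spine property with the duality to $\DD$, we conclude by Definition \ref{def:core-tunnel} that $K$ is a tunnel system and each $e_i$ is a core tunnel.
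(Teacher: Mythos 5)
Your proof is correct and follows essentially the same route as the paper's, which simply asserts that the cross-sectional (cocore) disks of the 1--handles form a minimal disk system dual to $K$ and that each handle retracts to its core; you are supplying the details the paper omits. One small caution: the second stage of your retraction should not be a literal vertical collapse of $S\times[0,1]$ onto $S\times\{0\}$ (that map is not the identity on the vertical segments of the $e_i$, so it does not end at $\bdy_-C\cup K$); instead retract $S\times[0,1]$ onto $S\times\{0\}$ together with the finitely many vertical fibers through the feet of the cores, e.g.\ by a cone-type retraction inside small disk neighborhoods of those fibers and the vertical collapse elsewhere.
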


\begin{proof}
We need to show that $K = \bigcup_{i=1}^{n-m} e_i$ is a spine for $C$
which is dual to a minimal system of disks.  Note that if we let $D_i$
be a cross--sectional disk in a 1--handle, then the collection $\{D_1,
\dots, D_{n-m}\}$ forms a minimal disk system dual to $K$.  Moreover,
the $i$-th 1--handle deformation retracts to the $e_i$ at its core.
Extending this to all of $C$, we see that $C$ deformation retracts to
$\bdy_-C \cup K$.  Hence each $e_i$ is a core tunnel for $C$, and the
collection of the $e_i$ forms a tunnel system.
\end{proof}

\begin{define}\label{def:StandardGen}
Inside the $(m;n)$--compression body $C$, for each core tunnel $e_i$
of Lemma \ref{lemma:cores}, connect the endpoints of $e_i$ by an arc
in $S\times \{0\}$.  The result is a loop $\gamma_i$ in $C$.  In fact,
if we let $\alpha_1, \beta_1, \dots, \alpha_{2m-2}, \beta_{2m-2}$ be
loops generating $\pi_1(S)$, then the loops $\alpha_j, \beta_j,
\gamma_i$, for $j=1, \dots, 2m-2$, and $i=1, \dots, n-m$, generate
$\pi_1(C)$ (after we extend the $\gamma_i$ to meet a common basepoint
on $S\times \{0\}$).  We call such a collection of generators
\emph{standard} generators for $\pi_1(C)$.
\end{define}

Hereafter, we will primarily work with the $(1;n+1)$--compression
body.  Standard generators will be written as $\alpha, \beta,
\gamma_1, \dots, \gamma_n$.

\subsection{Hyperbolic geometry of compression bodies}
We are interested in relating the topology of a compression body $C$
to hyperbolic geometry on $C$.  Specifically, we wish to understand
the behavior of geodesic arcs homotopic or isotopic to a core tunnel
in a hyperbolic structure on the interior of $C$.  We obtain a
hyperbolic structure by taking a discrete, faithful representation
$\rho \co \pi_1(C) \to \PSL(2, \CC)$, and considering the manifold
$\HH^3 / \rho(\pi_1(C))$.

Recall that a discrete subgroup $\Gamma < {\rm PSL}(2,\CC)$
is \emph{geometrically finite} if $\HH^3/\Gamma$ admits a
finite--sided, convex fundamental domain.  In this case, we will also
say that the manifold $\HH^3/\Gamma$ is \emph{geometrically finite}.

A discrete subgroup $\Gamma < \PSL(2,\CC)$ is \emph{minimally
  parabolic} if it has no rank one parabolic subgroups.  In other
words, a discrete, faithful representation $\rho\co \pi_1(M) \to
\PSL(2,\CC)$ will be minimally parabolic if and only if whenever
$\rho(g)$ is parabolic, $g$ is conjugate to an element of the
fundamental group of a torus boundary component.

\begin{define}
  A discrete, faithful representation $\rho\co\pi_1(M)\to \PSL(2,\CC)$
  is a \emph{minimally parabolic geometrically finite uniformization
    of $M$} if $\rho(\pi_1(M))$ is minimally parabolic and
  geometrically finite, and $\HH^3/\rho(\pi_1(M))$ is homeomorphic to
  the interior of $M$.
\label{def:gf}
\end{define}

We must describe the Ford domain of a minimally parabolic
geometrically finite uniformization $\rho$ of a $(1;n+1)$--compression
body.

\begin{define}
Let $A \in PSL(2,\CC)$ be loxodromic, and let $H$ be any horosphere
about infinity in upper half space $\HH^3$.  Then the \emph{isometric
sphere} corresponding to $A$, which we write $I(A)$, is the set of
points in $\HH^3$ equidistant from $H$ and $A^{-1}(H)$.
\label{def:isosphere}
\end{define}

If $A = \begin{bmatrix}a & b \\ c & d \end{bmatrix},$ then it is well
known that the isometric sphere $I(A)$ is the Euclidean hemisphere with
center $-d/c$ and radius $1/|c|$ (see, for example
\cite{LackenbyPurcell}).  

\begin{define}
Let $\Gamma$ be a discrete subgroup of $\PSL(2,\CC)$, with
$\Gamma_\infty < \Gamma$ the subgroup fixing the point at infinity in
$\HH^3$.  For $g \in \Gamma \setminus \Gamma_\infty$, let $B_g$ denote
the open half ball bounded by $I(g)$, and define the \emph{equivariant
  Ford domain} $\F$ to be the set
$$\F = \HH^3 \setminus ( \bigcup_{g \in \Gamma \setminus \Gamma_\infty}
B_g ).$$

A \emph{vertical fundamental domain} for a parabolic group
$\Gamma_\infty$ fixing the point at infinity in $\HH^3$ is a choice of
(connected) fundamental domain for the action of $\Gamma_\infty$ which
is cut out by finitely many vertical geodesic planes in $\HH^3$.

The \emph{Ford domain} of $\Gamma$ is defined to be the intersection
of $\F$ with a vertical fundamental domain for the action of
$\Gamma_\infty$.
  \label{def:ford}
\end{define}

The Ford domain is not canonical, because there is a choice of
vertical fundamental domain.  However, the region $\F$ is canonical.

Bowditch showed that if $\Gamma < \PSL(2,\CC)$ is geometrically
finite, then every convex fundamental domain for $\HH^3/\Gamma$ has
finitely many faces \cite[Proposition 5.7]{bowditch}.  In particular,
when $\Gamma$ is geometrically finite, there will only be finitely
many faces in a Ford domain.  This means that for all but finitely
many elements $g \in \Gamma \setminus \Gamma_\infty$ the isometric
sphere $I(g)$ is completely covered by some other isometric sphere.
We formalize this in a definition.

\begin{define}
  An isometric sphere $I(g)$ is said to be \emph{visible} if there
  exists an open set $U\subseteq \HH^3$ such that $U \cap I(g) \neq
  \emptyset$, and the hyperbolic distances satisfy
$$d(x, h\inv(H)) \geq d(x,H) = d(x,g\inv H)$$ for every $x \in U \cap
  I(g)$ and $h \in \Gamma \backslash \Gamma_\infty$, where $H$ is some
  horosphere about infinity.
  \label{def:visible}
\end{define}

A proof of the following fact may be found in \cite{LackenbyPurcell}.

\begin{lemma}
For $\Gamma$ discrete, the following are equivalent.
\begin{enumerate}
\item The isometric sphere $I(g)$ is visible.
\item There exists a two dimensional cell of the cell structure on
  $\mathcal{F}$ contained in $I(g)$.
\item $I(g)$ is not contained in $\bigcup_{h \in
  \Gamma\backslash(\Gamma_\infty \cup \Gamma_\infty g)} \bar{B}_h.$
\end{enumerate}
\end{lemma}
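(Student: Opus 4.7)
The plan is to prove the cycle $(1) \Rightarrow (2) \Rightarrow (3) \Rightarrow (1)$, using throughout the reformulations $x \in B_h \iff d(x,H) > d(x, h\inv(H))$ and $x \in I(h) \iff d(x,H) = d(x, h\inv(H))$. Two preliminary observations will underpin every step. First, $I(g_1) = I(g_2)$ exactly when $g_1\inv(H) = g_2\inv(H)$, i.e.\ when $g_1 \in \Gamma_\infty g_2$; this is why the coset $\Gamma_\infty g$ must be excluded from the union in (3), since otherwise $I(g) \subseteq \bar{B}_g$ would trivially obstruct the condition. Second, discreteness of $\Gamma$ makes the family $\{\bar{B}_h : h \in \Gamma \setminus \Gamma_\infty\}$ locally finite in $\HH^3$: the radius $1/|c|$ of $I(h)$ can exceed any fixed positive bound for only finitely many $h$, so only finitely many isometric spheres meet any compact subset of $\HH^3$, and in particular $\bigcup_h \bar{B}_h$ is closed.

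For $(1) \Rightarrow (2)$, I would start with an open $U$ witnessing visibility of $I(g)$, pick $x_0 \in U \cap I(g)$, and shrink $U$ to a relatively compact subneighborhood. By local finiteness, only finitely many $I(h)$ with $h \notin \Gamma_\infty g$ meet $U$, and each such $I(h)$ intersects the 2-dimensional open set $U \cap I(g)$ in at most a 1-dimensional arc. Removing these finitely many arcs from $U \cap I(g)$ leaves a nonempty open set; any point $x$ in it lies on $I(g)$, on no other $I(h)$ with $h \notin \Gamma_\infty g$, and in no $B_h$ (by visibility), so $x$ lies in the interior of a 2-cell of $\F$ contained in $I(g)$.

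For $(2) \Rightarrow (3)$, I would let $F$ be a 2-cell of $\F$ inside $I(g)$ and $x$ an interior point of $F$. Then $x \in \bdy \F$ yields $x \notin B_h$ for every $h \in \Gamma \setminus \Gamma_\infty$, and interiority in a 2-cell (as opposed to a lower dimensional face) forces $x \notin I(h)$ for every $h \notin \Gamma_\infty g$; together these give $x \in I(g) \setminus \bar{B}_h$ for every $h \in \Gamma \setminus (\Gamma_\infty \cup \Gamma_\infty g)$, which is (3). For $(3) \Rightarrow (1)$, I would take such an $x$ and use local finiteness to pick a neighborhood $U$ of $x$ disjoint from every $\bar{B}_h$ with $h \notin \Gamma_\infty \cup \Gamma_\infty g$. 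Since $h\inv(H) = g\inv(H)$ when $h \in \Gamma_\infty g$, for any $y \in U \cap I(g)$ and any $h \in \Gamma \setminus \Gamma_\infty$ one obtains $d(y, h\inv(H)) \geq d(y, H) = d(y, g\inv(H))$, which is precisely the visibility condition on $U \cap I(g)$.

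The main care required is the bookkeeping around $\Gamma_\infty$ and the coset $\Gamma_\infty g$: elements of $\Gamma_\infty g$ contribute the very same isometric sphere $I(g)$, while elements of $\Gamma_\infty$ have no isometric sphere at all. Once these are correctly tracked, the argument reduces to distance comparisons to horoballs, and the discreteness of $\Gamma$ supplies the local finiteness needed to move between 2-dimensional open sets on $I(g)$ and their 1-dimensional intersection loci with other isometric spheres.
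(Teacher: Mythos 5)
Your proof is correct. Note that the paper does not actually prove this lemma itself---it cites Lackenby--Purcell for the proof---and your argument (the cycle $(1)\Rightarrow(2)\Rightarrow(3)\Rightarrow(1)$ via the equidistance characterizations of $I(h)$ and $B_h$, the coset observation that $\Gamma_\infty g$ contributes the same sphere, and local finiteness of the isometric spheres from discreteness) is essentially the standard argument given there, so there is nothing substantive to compare beyond confirming the approach matches.
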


We will mainly be considering uniformizations of a $(1;
n+1)$--compression body where the Ford domain is of a particularly
simple type, which occurs in the following example.

\begin{example}
  Let $C$ be a $(1;3)$-compression body. Then $\pi_1(C) \cong ( \ZZ
  \times \ZZ) * \ZZ * \ZZ$.  We will choose generators $\alpha, \beta,
  \gamma$ and $\delta$ for $\pi_1(C)$, where $\alpha$ and $\beta$
  generate the $\ZZ \times \ZZ$ subgroup.  Consider the representation
\[
\begin{array}{ll}
\vspace{.1in}

\rho(\alpha) = \begin{bmatrix}1 & 100 \\ 0 & 1 \end{bmatrix} &
\rho(\beta) = \begin{bmatrix} 1 & 100i\\0 & 1 \end{bmatrix}\\

\rho(\gamma) = \begin{bmatrix} 0 & 1 \\ -1 & -5i \end{bmatrix} &
\rho(\delta) = \begin{bmatrix} -5-5i & -26-25i\\ 1 & 5 \end{bmatrix}
\end{array}
\]

Let $\Gamma_\infty = \langle \rho(\alpha), \rho(\beta) \rangle <
\PSL(2,\CC)$.  Here we have chosen $\rho(\alpha)$ and $\rho(\beta)$
somewhat arbitrarily so that they give a very large parabolic
translation length.  Drawing the isometric spheres corresponding to
$\rho(\gamma^{\pm1})$, and $\rho(\delta^{\pm1})$ gives us the picture
in Figure \ref{fig:3dFord}.

\begin{figure}
\centering
\includegraphics[scale=0.5]{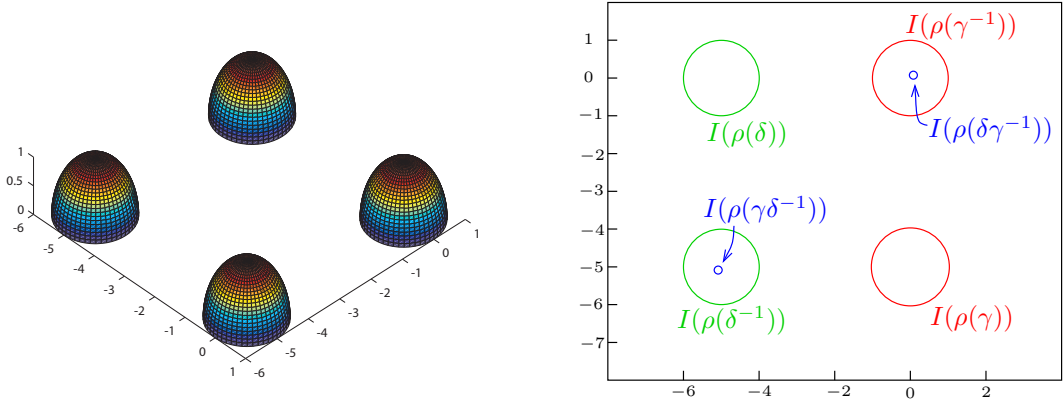}
\hspace{.2in}
\input{figures/initsimpleFord.pspdftex}
\caption{Isometric spheres from Example \ref{ex:SimpleFord} are shown,
  in 3--dimensions on the left, and their 2--dimensional intersections
  with $\CC$ on the right.}
\label{fig:3dFord}
\end{figure}

We will see that other isometric spheres, besides the translates under
$\Gamma_\infty$ of $I(\rho(\gamma^{\pm 1}))$ and $I(\rho(\delta^{\pm
  1}))$, will be invisible, hidden underneath these isometric spheres.
For example, the isometric spheres $I(\rho(\gamma\delta\inv))$ and
$I(\rho(\delta\gamma\inv))$ shown in Figure \ref{fig:3dFord} are
invisible.  Hence $\rho$ will give a minimally parabolic geometrically
finite uniformization of $C$ whose Ford domain is as in Figure
\ref{fig:3dFord}.
\label{ex:SimpleFord}
\end{example}

We now generalize Example \ref{ex:SimpleFord}.  To do so, set up the
following notation.

Let $C_n$ denote the $(1;n+1)$--compression body.  Hence $\pi_1(C_n)
\cong (\ZZ \times \ZZ) * F_n$, where $F_n$ denotes the free group on
$n$ letters $\ZZ * \dots * \ZZ$.  Let $\alpha, \beta, \gamma_1, \dots,
\gamma_n$ be generators of $\pi_1(C_n)$, with $\alpha$ and $\beta$
generating the $(\ZZ \times \ZZ)$ subgroup, and $\gamma_1, \dots,
\gamma_n$ standard, as in Definition \ref{def:StandardGen}, coming
from a tunnel system of $C_n$, as in Lemma \ref{lemma:cores}.
Finally, let $\rho \co \pi_1(C_n) \to \PSL(2,\CC)$ be a discrete
representation, taking $\alpha$ and $\beta$ to parabolics fixing
infinity, generating the subgroup $\Gamma_\infty = \langle
\rho(\alpha), \rho(\beta)\rangle < \PSL(2,\CC)$.

\begin{define}
With notation as above, suppose the isometric spheres corresponding to
$\rho(\gamma_1^{\pm 1}), \dots, \rho(\gamma_n^{\pm 1})$ and their
translates under $\Gamma_\infty$ are all pairwise disjoint, with none
properly contained in a half--ball bounded by one of the others.  Then
we say that $\rho$ gives a \emph{simple Ford domain} for $C_n$.
  \label{def:SimpleFord}
\end{define}

Note that Example \ref{ex:SimpleFord} is an example of a simple Ford
domain.  The use of the words ``Ford domain'' in Definition
\ref{def:SimpleFord} is justified by the following lemma.

\begin{lemma}\label{lemma:SimpleFord}
Suppose $\rho \co \pi_1(C_n) \to \PSL(2,\CC)$ gives a simple Ford
domain for $C_n$.  Then $\rho$ gives a minimally parabolic,
geometrically finite uniformization of $C_n$.  Moreover, after
possibly replacing the $\gamma_i$ by multiples of $\gamma_i$ with
elements in the $(\ZZ \times \ZZ)$ subgroup of $\pi_1(C_n)$, the
isometric spheres corresponding to $\rho(\gamma_1^{\pm 1}), \dots,
\rho(\gamma_n^{\pm 1})$, along with a choice of vertical fundamental
domain for $\Gamma_\infty$, cut out a Ford domain.
\end{lemma}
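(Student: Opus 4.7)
The plan is to apply Poincar\'e's polyhedron theorem to an explicit polyhedron built from the given isometric spheres. Fix a vertical fundamental domain $V$ for $\Gamma_\infty = \langle \rho(\alpha), \rho(\beta)\rangle$, for instance a parallelogram in $\CC$ spanned by the translation vectors of $\rho(\alpha)$ and $\rho(\beta)$, and let $P$ denote the intersection of $V$ with the exterior of the half--balls bounded by $I(\rho(\gamma_i^{\pm 1}))$, $i = 1, \ldots, n$. First I would adjust the generators: replacing $\gamma_i$ by $\alpha^{a_i}\beta^{b_i}\gamma_i$ leaves $I(\rho(\gamma_i))$ unchanged (left multiplication by a parabolic fixing $\infty$ does not affect the isometric sphere) but translates the center of $I(\rho(\gamma_i^{-1}))$ by a prescribed vector in the $\Gamma_\infty$ translation lattice. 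Using the disjointness hypothesis, I would choose exponents $a_i,b_i$ (after first translating each $\gamma_i$ by a $\Gamma_\infty$ element so that $I(\rho(\gamma_i))$ lies over $V$) so that $\rho(\gamma_i)$ carries $I(\rho(\gamma_i))$ onto a sphere $I(\rho(\gamma_i^{-1}))$ that also lies over $V$.

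Next I verify the hypotheses of Poincar\'e for $P$. The face pairings are the evident ones: $\rho(\alpha)$ and $\rho(\beta)$ pair opposite vertical sides of $V$, while $\rho(\gamma_i)$ pairs $I(\rho(\gamma_i))$ with $I(\rho(\gamma_i^{-1}))$ by the defining property of isometric spheres. The hypothesis that no isometric sphere is contained in the half--ball bounded by another ensures that each sphere contributes a visible $2$--dimensional face of $P$. The edge cycle condition is easy here because the isometric spheres are pairwise disjoint, so the only edges of $P$ arise either from two vertical faces of $V$ meeting, or from an isometric sphere meeting a vertical face. Edges of the first type give the standard commutator cycle at infinity with total dihedral angle $2\pi$. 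Edges of the second type sit on a single isometric sphere face, so their cycle has length two and is labeled by the trivial word $\gamma_i\gamma_i^{-1}$, again summing to $2\pi$. The parabolic completeness condition at $\infty$ is satisfied because $\Gamma_\infty \cong \ZZ\times\ZZ$ acts cocompactly on any sufficiently high horosphere truncated by $V$.

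Poincar\'e's theorem then gives that $\Gamma := \langle \rho(\alpha), \rho(\beta), \rho(\gamma_1), \ldots, \rho(\gamma_n)\rangle$ is discrete, that $P$ is a fundamental domain for $\Gamma$, and that $\Gamma$ is presented by these generators subject only to the relation $[\rho(\alpha),\rho(\beta)]=1$. This presentation matches that of $\pi_1(C_n) \cong (\ZZ\times\ZZ)\ast F_n$, so $\rho$ is faithful; since $P$ has only finitely many faces and $\infty$ is the sole parabolic fixed point of $\Gamma$ on $\partial P$ with stabilizer $\Gamma_\infty$, the representation is minimally parabolic and geometrically finite. Because $P$ is a convex fundamental domain cut out by a vertical fundamental domain of $\Gamma_\infty$ together with isometric hemispheres, it coincides with the Ford domain of $\Gamma$; in particular all other isometric spheres are hidden. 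Finally, collapsing $P$ via its face pairings attaches the $n$ pairs of isometric sphere faces as $n$ 1--handles to the torus factor produced by the vertical wall identifications, recovering the construction of the $(1;n+1)$--compression body from Definition \ref{def:comprbody}. The main technical obstacle is the generator adjustment step and the verification that Poincar\'e imposes no unexpected relations; the disjointness clause of the simple Ford hypothesis is precisely what eliminates any other potential edge cycle relations.
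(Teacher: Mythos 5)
Your proposal is correct and follows essentially the same route as the paper: adjust each $\gamma_i$ on the left and right by elements of $\Gamma_\infty$ so that both $I(\rho(\gamma_i))$ and $I(\rho(\gamma_i^{-1}))$ sit over the chosen vertical fundamental domain, apply the Poincar\'e polyhedron theorem to the resulting polyhedron $P$ to get discreteness, a Ford domain, and minimal parabolicity, and then recognize the quotient as $C_n$ by viewing the sphere-face identifications as 1--handle attachments onto $T^2\times\RR$. The paper outsources the Poincar\'e verification and the identification of $P$ as the Ford domain to cited results of Lackenby--Purcell rather than checking edge cycles by hand, but the argument is the same.
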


\begin{proof}
Choose a vertical fundamental domain for $\Gamma_\infty$.  Recall that
the center of the isometric sphere $I(\rho(\gamma_i))$ lies at the
point $\rho(\gamma_i\inv)(\infty)$.  We may multiply each $\gamma_i$,
$i=1, \dots, n$ on the right by some $w_i \in (\ZZ\times\ZZ)$ so that
the center $\rho(w_i\inv \gamma_i\inv)(\infty)$ of the isometric
sphere $I(\rho(\gamma_i w_i))$ lies inside the chosen vertical
fundamental domain.  Note that $w_i$ is a word in $\alpha$ and
$\beta$, and so $\alpha, \beta, \gamma_1 w_1, \dots, \gamma_n w_n$
still generate $\pi_1(C_n)$, and the $\gamma_i w_i$ still give
isometric spheres whose translates under $\Gamma_\infty$ are pairwise
disjoint, as in Definition \ref{def:SimpleFord}.  Thus without loss of
generality, we may assume that the centers of the $I(\rho(\gamma_i))$
are all contained in our chosen vertical fundamental domain.

Next, consider the isometric spheres corresponding to
$\rho(\gamma_i\inv)$, for $i=1, \dots, n$.  We may multiply each
$\gamma_i$, $i=1, \dots, n$, on the left by some $x_i \in (\ZZ\times
\ZZ)$ so that the center $\rho(x_i\inv \gamma_i)(\infty)$ of the
isometric sphere $I(\rho(\gamma_i\inv x_i))$ lies inside the chosen
vertical fundamental domain.  Note also that the center
$\rho(\gamma_i\inv x_i)(\infty)$ of $I(\rho(x_i\inv \gamma_i))$ is the
same as the center $\rho(\gamma_i\inv)(\infty)$ of $I(\rho(\gamma_i))$,
because $\rho(x_i)$ fixes $\infty$, so when we replace each $\gamma_i$
by $x_i\inv \gamma_i$ we obtain generators of $\pi_1(C_n)$ such that
the corresponding isometric spheres $I(\rho(\gamma_i^{\pm 1}))$ all
have centers within our chosen vertical fundamental domain.  Moreover,
note that these isometric spheres still satisfy the definition of a
simple Ford domain.  

Now, let $P$ be the intersection of the chosen vertical fundamental
domain with the exterior of the isometric spheres corresponding to
$\rho(\gamma_i^{\pm 1})$.  Because none of these isometric spheres is
contained inside another, and because they do not intersect, $P$ is
homeomorphic to a 3--ball, marked with simply connected \emph{faces},
which faces correspond to the faces of the vertical fundamental domain
and to each isometric sphere $I({\rho(\gamma^{\pm 1})})$.

Identify vertical sides of $P$ by elements of $\Gamma_\infty$, and
glue $I({\rho(\gamma_i)})$ to $I(\rho(\gamma_i\inv))$ via
$\rho(\gamma_i\inv)$, for each $i=1, \dots, n$.  This glues faces of
$P$ by isometry, and since the intersections of faces (\emph{edges} of
$P$) are only on the vertical fundamental domain, the Poincar{\'e}
Polyhedron Theorem (cf \cite[Theorem 2.21]{LackenbyPurcell},
\cite{EpsteinPetronio}), implies that the result of applying these
gluings to $P$ is a smooth manifold $M$, with $\pi_1(M) \cong
\pi_1(C_n)$ generated by face pairings.  Moreover, by \cite[Theorem
  2.22]{LackenbyPurcell}, $P$ must be a Ford domain for $M \cong
\HH^3/\Gamma$, and by \cite[Lemma 2.18]{LackenbyPurcell}, it is
minimally parabolic.

Hence, to show that this gives a minimally parabolic geometrically
finite uniformization of $C_n$, it remains only to show that $M$ is
homeomorphic to $C_n$.  We show this by considering gluing faces of
$P$ one at a time.

First, glue faces corresponding to the vertical fundamental domain.
Since $\Gamma_\infty$ is a rank--2 parabolic group, the result is
homeomorphic to $T^2 \times \RR$, where $T^2$ is the torus.  Now,
notice that when we glue the face $I({\rho(\gamma_i)})$ to
$I({\rho(\gamma_i\inv)})$, the result is topologically equivalent to
attaching a 1--handle.  Hence, when performing the gluing one by one
for each $i=1, \dots, n$, we obtain a manifold homeomorphic to $C_n$.
\end{proof}

\subsection{Tunnel systems and hyperbolic geometry}

We are interested in studying a tunnel system for a manifold, and we
need to identify a tunnel system in a geometrically finite minimally
parabolic uniformization.

For $\gamma \in \PSL(2,\CC)$ that does not fix the point at infinity
in $\HH^3$, there is a geodesic $e_\gamma$ running from
$\gamma^{-1}(\infty)$ to $\infty$ in $\HH^3$.  This geodesic
$e_\gamma$ will meet the center of the Euclidean hemisphere
$I(\gamma)$.  We say that $e_\gamma$ is the \emph{geometric dual} of
the isometric sphere $I(\gamma)$.  We also refer to $e_\gamma$ as the
\emph{geodesic dual} to the isometric sphere $I(\gamma)$.

\begin{lemma}\label{lemma:DualsAreTunnels}
Let $C$ denote the $(1;n+1)$--compression body, where $\pi_1(C) \cong
(\ZZ \times \ZZ) * F_n$ has generators $\alpha, \beta, \gamma_1,
\cdots, \gamma_n$, with $\alpha$ and $\beta$ generating the $(\ZZ
\times \ZZ)$ subgroup, and $\gamma_1, \dots, \gamma_n$ standard,
coming from a tunnel system, as in Definition \ref{def:StandardGen}.
Let $\rho\co \pi_1(C) \to \PSL(2,\CC)$ be a minimally parabolic,
geometrically finite uniformization of $C$, normalized such that
$\rho(\alpha)$ and $\rho(\beta)$ fix the point of infinity of $\HH^3$.
Finally, let $\tilde{d}_i$ be the geodesic dual to the isometric
sphere $I(\rho(\gamma_i\inv))$.  Then under the quotient
action of $\Gamma$, the images of the dual edges $\tilde{d}_i$ are
homotopic to a spine of $C$.  Hence these geometric edges are
homotopic to a tunnel system.
\end{lemma}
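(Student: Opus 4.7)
The plan is to show that for each $i$, the image $d_i$ of $\tild{d}_i$ under the covering map $p\co \HH^3 \to M = \HH^3/\rho(\pi_1(C))$ is properly homotopic to the $i$--th core tunnel $e_i$ from Lemma~\ref{lemma:cores}. Since the collection $\{e_i\}$ is a spine of $C$, it will follow that $\{d_i\}$ is homotopic to a spine, hence to a tunnel system.

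First I would set up basepoints. Choose an embedded horospherical torus $T \subset M$ near $\bdy_-C$ lifting to a horosphere $H$ about $\infty$, a basepoint $x_0 \in T$, and a lift $\tild{x}_0 \in H$. By Lemma~\ref{lemma:cores} we may arrange each $e_i$ to have both endpoints on $T$. By Definition~\ref{def:StandardGen}, the generator $\gamma_i$ is represented by a loop at $x_0$ of the form $\sigma_i \cdot e_i \cdot \tau_i$, where $\sigma_i, \tau_i$ are arcs in $T$. Lifting this loop from $\tild{x}_0$ gives a path ending at $\rho(\gamma_i)(\tild{x}_0)$. Because $\sigma_i \subset T$ lifts into a single connected horosphere (namely $H$), the lift $\tild{e}_i$ of $e_i$ begins on $H$. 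Because $\tau_i \subset T$ also lifts into a single horosphere and terminates at $\rho(\gamma_i)(\tild{x}_0)$, that horosphere must be $\rho(\gamma_i)(H)$, so $\tild{e}_i$ ends on $\rho(\gamma_i)(H)$. Hence $\tild{e}_i$ is an arc between the horoballs at the parabolic fixed points $\infty$ and $\rho(\gamma_i)(\infty)$.

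The geodesic $\tild{d}_i$ runs from $\rho(\gamma_i)(\infty)$ to $\infty$, so it has the same ideal endpoints as $\tild{e}_i$. Since $\rho$ is faithful and $\gamma_i$ is not in the $\ZZ\cross\ZZ$ factor of the free product decomposition of $\pi_1(C)$, we have $\rho(\gamma_i)(\infty) \neq \infty$, so after shrinking $H$ if necessary the two horoballs $H$ and $\rho(\gamma_i)(H)$ are disjoint. In the space $X := \HH^3 \setminus \bigl(\mathrm{int}(H) \cup \mathrm{int}(\rho(\gamma_i)(H))\bigr)$, which is simply connected, both $\tild{e}_i$ and the portion of $\tild{d}_i$ lying in $X$ are arcs with endpoints on the two connected horospherical boundary components $\bdy H$ and $\bdy \rho(\gamma_i)(H)$. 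Hence they are properly homotopic rel these horospheres. Pushing the homotopy forward by $p$, and noting that both $H$ and $\rho(\gamma_i)(H)$ project to the single torus $T$, produces a proper arc homotopy in $M$ between $e_i$ and $d_i$ with endpoints moving in $T$; isotoping $T$ back to $\bdy_-C$ yields the homotopy of arcs with endpoints in $\bdy_-C$.

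Thus $\bigcup_i d_i$ is homotopic to $\bigcup_i e_i$, which is a spine of $C$ by Lemma~\ref{lemma:cores}. The main subtlety is bookkeeping in the second paragraph: one must carefully track the three pieces of the lifted loop representing $\gamma_i$ so that the endpoints of $\tild{e}_i$ are placed on the horoballs at exactly the ideal endpoints $\infty$ and $\rho(\gamma_i)(\infty)$ of $\tild{d}_i$. Once this identification of endpoints is made, simple connectivity of the complement of two disjoint horoballs reduces the geometric statement to a standard homotopy argument.
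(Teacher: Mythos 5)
Your proposal is correct and follows essentially the same route as the paper: both arguments lift a representative of $\gamma_i$ to the universal cover, identify the endpoints of the lifted core tunnel with points on the horoballs centered at $\infty$ and $\rho(\gamma_i)(\infty)$ (the ideal endpoints of $\tilde{d}_i$), and conclude by a homotopy using simple connectivity, the only difference being that the paper constructs a concatenated path upstairs homotopic to $\tilde{d}_i$ and projects it down to $e_i$, whereas you lift $e_i$ and homotope it to $\tilde{d}_i$ upstairs. The bookkeeping you flag (placing the endpoints of $\tilde{e}_i$ on the correct horoballs, modulo replacing $\gamma_i$ by $w_i\gamma_i v_i$ with $w_i,v_i\in\ZZ\times\ZZ$) is exactly the normalization the paper also performs, and it does not affect the image geodesic in the quotient.
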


\begin{proof}
We will show that the images of the geodesics $\tilde{d}_i$ are
homotopic to the core tunnels $e_i$ of Lemma \ref{lemma:cores}, and
this will be enough to prove the lemma.

In the topological manifold $C$, take the closure a regular
neighborhood $N$ of $\partial_-C$ so that the closure $\overline{N}$
is homeomorphic to $\partial_-C \times [0,1]$. Choose $p = (p',1) \in
\partial_-C \times\{1\}$ and let $q = (p',0) \in \partial_-C\times
\{0\}$. Let $f\co [0,1] \to C$ be the straight line from $p$ to $q$.

In $\HH^3$, choose a vertical fundamental domain $D$ for $\Gamma_\infty
= \langle \rho(\alpha), \rho(\beta)\rangle$.  As in the proof of Lemma
\ref{lemma:SimpleFord}, we may replace the $\gamma_i$ by products
$w_i\cdot\gamma_i\cdot v_i$ where $w_i,v_i$ are in $\Gamma_\infty$,
and thereby assume that $D$ contains $\rho(\gamma_i^{\pm 1})(\infty)$
for all $i=1, 2, \cdots, n$ (or rather, these points are contained in
the closure of $D$ in $\HH^3 \cup \CC \cup \{\infty\}$).  Note that
the replacement doesn't affect the argument, since under these
translations, dual geodesics $\tilde{d}_i$ still map to the same
geodesic in $\HH^3/\rho(\pi_1(C))$.

The lift $\tilde{p}$ of $p$ into $D$ is a point on a horoball $H$
about $\infty$.  For each loxodromic $\rho(\gamma_i)$, define
$\tilde{p}_i = \rho(\gamma_i)(\tilde{p})$.  The point $\tilde{p}_i$
lies on a horosphere centered at $\rho(\gamma_i)(\infty)$. For each $i
= 1, 2, \hdots, n$, let $\tilde{g}_i$ be a geodesic arc in $D$ from
$\tilde{p}$ to $\tilde{p}_i$.  Under the action of $\Gamma$, the arc
$\tilde{g}_i$ becomes a loop in the homotopy class of $\gamma_i$ in
$C$.

Let $\tilde{f}_i$ be a geodesic arc in $D$ from $\tilde{p}_i$ to
$\gamma_i(\infty)$, and let $\tilde{f}'_i$ be a geodesic arc from
$\infty$ to $\tilde{p}$. Under the action of $\Gamma$, the closure of
the quotient of the arcs $\tilde{f}_i$ and $\tilde{f}_i'$ in $C$
become arcs from $p$ to points on $\partial_-C$, which are homotopic
to $f$ rel $p$, and the homotopy may be taken to keep an endpoint of
each of the arcs on $\partial_-C$.

Set $\tilde{h}_i$ to be the arc $\tilde{f}'_i$ followed by
$\tilde{g}_i$ followed by $\tilde{f}_i$.  Then $\tilde{h}_i$ runs from
$\infty$ to $\gamma_i(\infty)$.  Therefore $\tilde{h}_i$ is homotopic
to $\tilde{d}_i$.

On the other hand, under the action of $\Gamma$, $\tilde{h}_i$ is
mapped to a loop with endpoints on $\partial_-C$ in the homotopy class
of $\gamma_i$.  Allowing the endpoints of this loop to move on
$\bdy_-C$, we may homotope to the arc $e_i$, which is a core tunnel
from Lemma \ref{lemma:cores}, corresponding to the standard generator
$\gamma_i$ coming from Definition \ref{def:StandardGen}.
\end{proof}

Lemma \ref{lemma:DualsAreTunnels} shows only that the geodesic duals
to isometric spheres corresponding to a set of generators gives a set
\emph{homotopic} to a tunnel system.  We are interested in examples of
when these geodesics are \emph{isotopic} to a tunnel system.  One
example of when this will occur comes from the following lemma.

\begin{lemma}\label{lemma:SimpleTunnelSystem}
With notation as in Lemma \ref{lemma:DualsAreTunnels}, if the Ford
domain is simple and the isometric sphere corresponding to each
$\rho(\gamma_i)$ is visible, then in the quotient manifold $C \cong
\HH^3 / \rho(\pi_1(C))$ the images of the $\tilde{d}_i$ are isotopic
to a spine of $C$.  Hence these edges form a geodesic tunnel system.
\end{lemma}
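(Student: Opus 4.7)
The plan is to identify the arcs $d_i := \tilde{d}_i/\rho(\pi_1(C))$ as a spine of $C$ dual to a minimal system of disks coming directly from the face structure of the Ford domain, and then invoke the uniqueness up to isotopy of spines dual to a given minimal disk system (noted immediately after Definition~\ref{def:spine}).

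First I would read off a minimal disk system from the Ford domain $P$. By Lemma~\ref{lemma:SimpleFord}, $P$ is cut out by the pairwise disjoint isometric spheres $I(\rho(\gamma_i^{\pm 1}))$ together with a vertical fundamental domain for $\Gamma_\infty$, and the face pairing $\rho(\gamma_i)\co I(\rho(\gamma_i^{-1})) \to I(\rho(\gamma_i))$ glues each pair into a properly embedded disk $D_i$ in $C$. Simplicity of the Ford domain forces the $D_i$ to be pairwise disjoint. Cutting $C$ along $\bigcup_{i=1}^n D_i$ reverses the $n$ one-handle attachments described in the last paragraph of the proof of Lemma~\ref{lemma:SimpleFord}, leaving a manifold homeomorphic to $\bdy_-C \times [0,1)$. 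Hence $\{D_1,\ldots,D_n\}$ is a minimal system of disks for $C$.

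Next I would verify that the $d_i$ are dual to $\{D_i\}$. Each $\tilde{d}_i$ is the vertical geodesic from $\rho(\gamma_i)(\infty)$ to $\infty$ and therefore passes through the apex of $I(\rho(\gamma_i^{-1}))$. Visibility of the $I(\rho(\gamma_i^{\pm 1}))$ ensures that this apex actually lies on $\bdy P$, and simplicity of the Ford domain forces the vertical ray above the apex to meet no other isometric sphere. The remaining half of $\tilde{d}_i$ (from the apex of $I(\rho(\gamma_i^{-1}))$ down to $\rho(\gamma_i)(\infty)$ on $\CC$) is identified by $\rho(\gamma_i)$ with the vertical ray above the apex of $I(\rho(\gamma_i))$, which is again contained in $P$. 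Consequently, in the quotient $d_i$ is a smooth geodesic arc meeting $D_i$ transversely in exactly one point, and disjoint from every $D_j$ with $j\neq i$.

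Finally, setting $K := \bigcup_i d_i$ and viewing it as a graph with valence-one vertices on $\bdy_-C$ (obtained from the two cusp ends of each $d_i$), I would check that $K$ is a spine of $C$ dual to $\{D_i\}$. Cutting $C$ along $\bigcup D_i$ yields $\bdy_-C\times I$ with two copies of each $D_i$ sitting on $\bdy_-C \times \{1\}$, and the cut-open pieces of $K$ become arcs of the product structure joining the paired disks; collapsing the $I$-factor then deformation-retracts $C$ onto $\bdy_-C \cup K$, realizing $K$ as a spine of $C$ dual to $\{D_i\}$. Hence $K$ is a core tunnel system in the sense of Definition~\ref{def:core-tunnel}, and since each $d_i$ is a geodesic this is a geodesic tunnel system, as claimed. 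The main obstacle I anticipate is the bookkeeping verifying that $\tilde{d}_i$ sits inside $P$ as exactly two vertical rays and nothing more, but this is immediate from the simple Ford domain hypothesis: no other isometric sphere can lie in the vertical column above the apex of $I(\rho(\gamma_i^{\pm 1}))$ without violating disjointness.
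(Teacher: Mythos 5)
Your argument is correct, but it reaches the conclusion by a different route than the paper. The paper never passes through the disk system: it constructs an explicit equivariant deformation retract of $\F \setminus H$ (the equivariant Ford domain minus a horoball $H$ about $\infty$) onto $\bdy H$ together with the orbit of the dual geodesics, in two steps --- a vertical-line homotopy pushing everything down onto the boundary of disjoint cone frustra $C_i, C_i'$ chosen as regular neighborhoods of $\tilde{d}_i$ and $\rho(\gamma_i)(\tilde{d}_i)$, followed by collapsing each frustrum onto its core geodesic. Simplicity of the Ford domain is used there to guarantee the frustra and their $\Gamma_\infty$--translates are pairwise disjoint, so the retraction is equivariant and descends to exhibit $\bdy_-C \cup \bigcup_i d_i$ directly as a spine. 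Your route instead reads the minimal disk system $\{D_i\}$ off the face pairings of $P$, checks that each $d_i$ meets $D_i$ once at the identified apexes and misses the other disks, and then appeals to the cut-open product structure and the uniqueness of dual spines; this buys you a shorter, more combinatorial argument at the cost of importing the topological machinery on disk systems and dual spines cited from the thesis. The one step you should not leave as an assertion is that the cut-open pieces of $K$ are vertical arcs of the product structure on $\bdy_-C \times [0,1]$ (i.e.\ unknotted rel endpoints): this is where the geometry actually enters, and it follows because after gluing only the vertical faces of $P$ one obtains $T^2 \times \RR$ minus the half-ball bites, in which the rays above the apexes are visibly fibers of the product --- essentially the same vertical-line observation that powers the paper's first homotopy $L_t$. (Also, minor point: the element carrying the lower half of $\tilde{d}_i$ onto the ray above the apex of $I(\rho(\gamma_i))$ is $\rho(\gamma_i^{-1})$, not $\rho(\gamma_i)$, with the paper's conventions.)
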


\begin{proof}
Let $\F$ be the equivariant Ford domain.  Let $H \subset \F$ be an
embedded horoball about infinity.  As in \cite[Lemma
  3.11]{LackenbyPurcell}, we construct an equivariant deformation
retract of $\F \setminus H$ onto the union of the geodesic arcs
$\Gamma_\infty (\tilde{d}_i \cap (\F \setminus H))$ and $\partial H$.
We do so in two steps.

First, by Lemma \ref{lemma:SimpleFord}, we may assume that the
isometric spheres corresponding to $\rho(\gamma_i^{\pm 1})$, $i=1,
\dots, n$, along with a vertical fundamental domain for
$\Gamma_\infty$ cut out a Ford domain.  The boundaries of the
isometric spheres give embedded circles on $\CC$, which bound disjoint
disks $D_1, D'_1, D_2, D'_2, \dots, D_{n}, D'_n$ on $\CC$, with $D_i$
corresponding to $\rho(\gamma_i)$ and $D'_i$ corresponding to
$\rho(\gamma_i^{-1})$.  Now, choose a value of $\epsilon>0$ such that
for the disks $E_i$ and $E_i'$, which are the $\epsilon$ neighborhoods
of $D_i$ and $D_i'$, respectively, the collection $E_1, E_1', E_2,
E_2', \dots, E_n, E_n'$ on $\CC$ still consists of disjoint disks.
Take the vertical projection of these $E_i, E_i'$ onto the boundary of
the horoball $H$; we will continue to denote these disks on $\bdy H$
by $E_1, E_1', \dots, E_n, E_n'$.  For each $i = 1, 2, \dots, n$,
consider the frustrum $C_i$ of the solid (Euclidean) cone in $\HH^3$
which intersects $\bdy H$ in the disk $E_i$, and intersects $\CC$ in
the disk $D_i$.  Similarly, we have the frustrum $C_i'$ meeting $\bdy
H$ in $E_i'$ and meeting $\CC$ in $D_i'$.  By choice of $\epsilon$,
the sets $C_1 \cap (\F \setminus H), C_1' \cap (\F \setminus H),
\dots, C_n \cap (\F \setminus H), C_n' \cap (\F \setminus H)$, as well
as their translates under $\Gamma_\infty$, are disjoint in $\F
\setminus H$.  Let
$$C = \Gamma_\infty \left( \bigcup_{i=1}^n ( C_i \cup C_i') \cap (\F
  \setminus H) \right) \subset (\F \setminus H).$$

The first step of the homotopy is to map $\F \setminus (C \cup H)$
onto $\partial (C \cup H)$ via the vertical line homotopy.  That is,
each point $x$ in $\F \setminus (C \cup H)$ lies on a vertical line
through $\infty$, and this line will meet $\partial (C\cup H)$ exactly
once.  Let $L_t(x)$ be the point on this vertical line, so that
$L_0(x)$ is the identity and $L_1(x)$ lies on $\partial (C \cup H)$.
Note the map $L_t$ is continuous, equivariant under the action of
$\rho(\pi_1(C))$, and descends to a continuous map in the quotient
$\HH^3 / \rho(\pi_1(C))$.

The second step is to deformation retract $C \cap (\F\setminus H)$
onto the set
$$\left(\partial H \cup \left(\bigcup_{i=1}^n \Gamma_\infty (\tilde{d}_i \cup
\rho(\gamma_i)(\tilde{d}_i))\right)\right) \cap (\F \setminus H).
$$ Since $C_i$ and $C'_i$ form regular neighborhods of $\tilde{d}_i$
and $\rho(\gamma_i)(\tilde{d}_i)$, respectively, there is a
deformation retract sending each $C_i \cap (\F\setminus H)$ and $C_i'
\cap (\F\setminus H)$, $i=1, \dots, n$, onto the geodesic at its core.
Note by choice of $\epsilon$, we may perform these deformation
retracts simultaneously and equivariantly, since none of these cones
intersect in $\F\setminus H$.  It is clear that we can modify this
deformation retract to a deformation retract onto $\tilde{d}_i \cup
(\bdy H \cap C_i)$ or $\rho(\gamma_i)\tilde{d}_i \cup (\bdy H \cap
C_i')$, for $i=1, \dots, n$.  We let $f_t$ be the deformation retract
of the second step.  Then the deformation retract $L_t$ followed by
$f_t$ is the desired equivariant deformation retract.
\end{proof}

\section{Tunnel systems in compression bodies}

In this section we show that the geodesic duals in the Ford domain may
be made to intersect while retaining a geometrically finite
structure.

\begin{lemma}\label{lem:DualToInvisibleSphere}
Let $\gamma$ and $\delta$ be loxodromic generators of a
$(1;n)$--compression body $C$.  Suppose that under some geometrically
finite uniformization $\rho\co \pi_1(C) \to \PSL(2,\CC)$ of $C$, the
faces of the Ford domain corresponding to $\rho(\delta^{\pm 1})$ and
$\rho((\delta\gamma\inv)^{\pm 1})$ are visible, and that the isometric
sphere $I({\rho(\gamma)})$ is contained in the Euclidean half--ball
bounded by the isometric sphere $I({\rho(\delta)})$.  Then the
geometric dual $\widetilde{g}$ to $I({\rho(\gamma)})$ in $\HH^3$ is
mapped to a geodesic $g$ under the quotient $\HH^3 \to \HH^3 /
\rho(\pi_1(C))$ with the property that $g$ lifts to geodesics in
$\HH^3$ containing the arcs:
\begin{enumerate}
\item $\alpha_1$, running from $\infty$ to a point on
  $I({\rho(\delta)})$ (a subarc of the geodesic dual to
  $I(\rho(\gamma))$), 
\item $\alpha_2$, running from a point on $I({\rho(\delta\inv)})$ to a
  point on $I({\rho(\gamma \delta\inv)})$ (a subarc of the geodesic
  from the center of $I(\rho(\delta\inv))$ to the center of
  $I(\rho(\gamma\delta\inv))$), 
\item and $\alpha_3$, running from $\infty$ to a point on
  $I({\rho(\delta\gamma\inv)})$ (a subarc of the geodesic dual to
  $I(\rho(\gamma\inv))$).  
\end{enumerate}
\end{lemma}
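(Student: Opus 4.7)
The plan is to exhibit $\alpha_1$, $\alpha_2$, and $\alpha_3$ as subarcs of three particular lifts of $g$ to $\HH^3$, namely the translates $\widetilde{g}$, $\rho(\delta)(\widetilde{g})$, and $\rho(\gamma)(\widetilde{g})$ obtained by successively applying face-pairings of the Ford domain starting from $\widetilde{g}$. Since each of these is a translate of $\widetilde{g}$ by an element of $\rho(\pi_1(C))$, each projects to $g$ in the quotient, so only the locations of the relevant endpoints on the prescribed isometric spheres need to be verified.

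For $\alpha_1$, I would begin with $\widetilde{g}$, the vertical line from $\rho(\gamma^{-1})(\infty) \in \CC$ (the center of $I(\rho(\gamma))$) up to $\infty$. The hypothesis $I(\rho(\gamma)) \subset B_{\rho(\delta)}$ forces this basepoint into the open disk on $\CC$ bounded by $I(\rho(\delta))$, so $\widetilde{g}$ crosses $I(\rho(\delta))$ at a single point $p_1$, and $\alpha_1$ is the portion from $\infty$ down to $p_1$. Using the standard identity $\rho(g)(I(\rho(g))) = I(\rho(g^{-1}))$, the face-pairing $\rho(\delta)$ sends $p_1$ to $p_1' \in I(\rho(\delta^{-1}))$ and sends the portion of $\widetilde{g}$ past $p_1$ to a subarc of $\rho(\delta)(\widetilde{g})$. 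Computing the ideal endpoints of $\rho(\delta)(\widetilde{g})$ gives $\rho(\delta\gamma^{-1})(\infty)$ and $\rho(\delta)(\infty)$, which are exactly the centers of $I(\rho(\gamma\delta^{-1}))$ and $I(\rho(\delta^{-1}))$, so $\rho(\delta)(\widetilde{g})$ is the geodesic between those two centers named in the statement. It crosses $I(\rho(\gamma\delta^{-1}))$ at a point $p_2$, and $\alpha_2$ is the subarc from $p_1'$ to $p_2$. Applying the face-pairing $\rho(\gamma\delta^{-1})$ at $p_2$ and using the homomorphism relation $\rho(\gamma\delta^{-1})\rho(\delta) = \rho(\gamma)$ gives the next lift $\rho(\gamma)(\widetilde{g})$, which has ideal endpoints $\infty$ and $\rho(\gamma)(\infty)$ (the center of $I(\rho(\gamma^{-1}))$) and is therefore precisely the dual geodesic to $I(\rho(\gamma^{-1}))$; this lift enters the Ford domain at $p_2' := \rho(\gamma\delta^{-1})(p_2) \in I(\rho(\delta\gamma^{-1}))$, and $\alpha_3$ is its subarc from $p_2'$ up to $\infty$.

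Most of the argument is notational bookkeeping, but the one substantive step, and the main obstacle, is justifying that $\rho(\delta)(\widetilde{g})$ actually exits the Ford domain through $I(\rho(\gamma\delta^{-1}))$, so that $p_2$ exists. This is where the visibility hypothesis enters: because $I(\rho((\delta\gamma^{-1})^{\pm 1}))$ and $I(\rho(\delta^{\pm 1}))$ are genuine faces of the Ford domain, their relative interiors lie outside every half-ball $B_{\rho(h)}$, so $p_1'$ lies outside $B_{\rho(\gamma\delta^{-1})}$; on the other hand the other ideal endpoint of $\rho(\delta)(\widetilde{g})$ is the center of $I(\rho(\gamma\delta^{-1}))$, which lies inside $B_{\rho(\gamma\delta^{-1})}$. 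Continuity of the geodesic then forces an intermediate crossing through $I(\rho(\gamma\delta^{-1}))$, locating $p_2$. Once this is in place, the same reasoning applied one step further shows that $\rho(\gamma)(\widetilde{g})$ crosses $I(\rho(\delta\gamma^{-1}))$ at $p_2'$ before ascending to $\infty$, which completes the construction of $\alpha_1$, $\alpha_2$, and $\alpha_3$.
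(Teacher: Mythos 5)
Your overall route is the same as the paper's: both arguments follow the three lifts $\widetilde{g}$, $\rho(\delta)(\widetilde{g})$ and $\rho(\gamma)(\widetilde{g})$, compute their ideal endpoints via $\rho(g)\bigl(I(\rho(g))\bigr) = I(\rho(g\inv))$, and read off $\alpha_1,\alpha_2,\alpha_3$ as the subarcs cut off by the indicated isometric spheres. However, the justification you give for the one substantive step --- the existence of the crossing point $p_2$ --- does not hold up. You claim that because $I(\rho(\delta^{\pm1}))$ and $I(\rho((\delta\gamma\inv)^{\pm1}))$ are visible, ``their relative interiors lie outside every half-ball $B_{\rho(h)}$,'' and hence $p_1'=\rho(\delta)(p_1)$ lies outside $B_{\rho(\gamma\delta\inv)}$. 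Visibility (Definition \ref{def:visible}) only guarantees that \emph{some} open subset of $I(g)$ lies outside all the other half-balls; a visible isometric sphere can perfectly well be partially covered by another half-ball, and $p_1'$ is a specific point of $I(\rho(\delta\inv))$ with no a priori reason to lie in the visible part. So the intermediate-value argument, as written, rests on a false principle.

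The conclusion you need is true, but it comes from the containment hypothesis $I(\rho(\gamma))\subset B_{\rho(\delta)}$ rather than from visibility, and this is how the paper argues. The paper introduces the bisector $S$ of the horoballs $\rho(\delta\inv)(H)$ and $\rho(\gamma\inv)(H)$, so that $\rho(\delta)(S)=I(\rho(\gamma\delta\inv))$ and $\rho(\gamma)(S)=I(\rho(\delta\gamma\inv))$, and locates $p_2$ as $\widetilde{g}\cap S$, inside $B_{\rho(\delta)}$. To see that this intersection exists (and, in your formulation, that $p_1'$ is outside $B_{\rho(\gamma\delta\inv)}$): since $I(\rho(\gamma))\subset B_{\rho(\delta)}$, the point $p_1$ lies strictly above $I(\rho(\gamma))$ on the vertical line $\widetilde{g}$, so $d(p_1,H)<d(p_1,\rho(\gamma\inv)(H))$, while $d(p_1,H)=d(p_1,\rho(\delta\inv)(H))$ because $p_1\in I(\rho(\delta))$; thus $p_1$ lies on the $\rho(\delta\inv)(H)$--side of $S$, whereas the foot $\rho(\gamma\inv)(\infty)$ of $\widetilde{g}$ lies on the $\rho(\gamma\inv)(H)$--side. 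Hence $\widetilde{g}$ crosses $S$ at a point $p_2$ below $p_1$, and applying $\rho(\delta)$ gives $\rho(\delta)(p_2)\in I(\rho(\gamma\delta\inv))$ with $\rho(\delta)(p_1)$ outside $B_{\rho(\gamma\delta\inv)}$, which is exactly what your $\alpha_2$ requires; the analogous statement for $\alpha_3$ follows by applying $\rho(\gamma)$ instead. With that repair your proof coincides with the paper's.
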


Lemma \ref{lem:DualToInvisibleSphere} is illustrated in
Figure~\ref{fig:DualToInvisible}.

\begin{figure}
\begin{center}
  \input{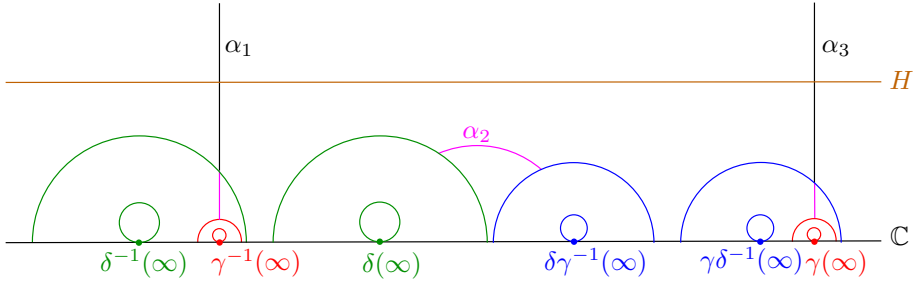}
\caption{Lift of $g$ consists of the arcs $\alpha_1$, $\alpha_2$, and
  $\alpha_3$.}
\label{fig:DualToInvisible}
\end{center}
\end{figure}

\begin{proof}
Since the uniformization $\rho\co \pi_1(C) \to \PSL(2,\CC)$ is applied
to each group element in the proof, we will suppress it for ease of
notation, writing $\gamma$, for example, rather than $\rho(\gamma)$.

Choose a horosphere $H$ about $\infty$.  Let $S$ be the set of points
in $\HH^3$ equidistant from $\delta\inv(H)$ and $\gamma\inv(H)$.  Let
$p_1$ be the intersection of $I(\delta)$ and $\widetilde{g}$, and let
$p_2$ be the intersection of $S$ and $\widetilde{g}$. Note that $p_2$
is contained inside the Euclidean half--ball bounded by $I(\delta)$,
since $I(\gamma)$ is contained inside that half--ball.

Now apply $\delta$ to this picture.  Under $\delta$, the horosphere
$\delta\inv(H)$ is mapped to $H$, and $H$ is mapped to $\delta(H)$,
and so the isometric sphere is $I(\delta)$ mapped to $I({\delta\inv})$
isometrically.  Likewise, $S$ gets mapped isometrically to
$I({\gamma\delta\inv})$.  The geodesic dual $\widetilde{g}$ is mapped
to the geodesic running from $\delta(\infty)$ to $\delta
\gamma\inv(\infty)$.  These are exactly the centers of the isometric
spheres $I(\delta\inv)$ and $I(\gamma\delta\inv)$, respectively.  Now
$\delta(\widetilde{g})$ is a geodesic which passes through
$\delta(p_1) \in I({\delta\inv})$ and $\delta(p_2) \in
I({\gamma\delta\inv})$.

In a similar manner as above, apply $\gamma$. The isometric sphere
$I({\gamma})$ is mapped to $I({\gamma\inv})$, and $S$ is mapped to
$I({\delta\gamma\inv})$.  The geodesic dual $\widetilde{g}$ gets mapped
to the geodesic dual to $I({\gamma\inv})$. Therefore $\widetilde{g}$
gets mapped to an arc containing the vertical line from a point on
$I({\gamma\inv})$ to $\infty$.

Now $\gamma(\widetilde{g})$, $\delta(\widetilde{g})$, and
$\widetilde{g}$ are mapped to the same geodesic $g$ in the quotient
$\HH^3/\rho(\pi_1(C))$.  The arcs $\alpha_1$, $\alpha_2$, and
$\alpha_3$ are just the portions of these geodesics which lie above
$I({\delta^{\pm 1}})$ and $I({(\delta\gamma\inv)^{\pm 1}})$.
\end{proof}

\begin{prop}\label{prop:OneIntersectingGeodesic}
There exists a geometrically finite, minimally parabolic
uniformization $\rho$ of a $(1;3)$--compression body $C$, and a
generator $\xi$ of the free part of $\pi_1(C)$ such that the image
of the geometric dual to $I({\rho(\xi)})$ under the action of
$\rho(\pi_1(C))$ has a self--intersection.
\end{prop}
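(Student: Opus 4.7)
My plan is to apply Lemma~\ref{lem:DualToInvisibleSphere} to a suitable choice of generator inside Example~\ref{ex:SimpleFord}. First, I take the representation $\rho$ from Example~\ref{ex:SimpleFord} and pick $\xi = \gamma\delta^{-1}$. Since $\gamma = \xi\delta$, the pair $\{\xi, \delta\}$ generates the same free subgroup of $\pi_1(C)$ as $\{\gamma,\delta\}$, so $\xi$ is a generator of the free part. A direct computation with the matrices of the example shows that $I(\rho(\xi))$ is a small hemisphere with center near $-5.1-5.1i$, contained in the Euclidean half--ball bounded by $I(\rho(\delta^{-1}))$ (which has center $-5-5i$ and radius $1$). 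Moreover, the simple structure of the Ford domain guarantees that $I(\rho(\delta^{\pm 1}))$ and $I(\rho(\gamma^{\pm 1}))$ are visible, and the identity $\delta^{-1}\xi^{-1} = \gamma^{-1}$ says the visibility hypothesis of Lemma~\ref{lem:DualToInvisibleSphere} holds, with $\xi$ playing the role of $\gamma$ and $\delta^{-1}$ the role of $\delta$. Applying Lemma~\ref{lem:DualToInvisibleSphere}, the image $g$ in the quotient of the geometric dual $\widetilde{g}$ to $I(\rho(\xi))$ has lifts in $\HH^3$ containing three arcs $\alpha_1, \alpha_2, \alpha_3$.

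To force $g$ to self--intersect, I would perturb $\rho$ inside the representation variety to a nearby representation $\rho'$, chosen so that the endpoint configuration of some translate $h\widetilde{g}$ (with $h$ an explicit short word in the generators, for instance $h = \rho'(\delta^{-1}\gamma)$) comes to lie on a common circle in $\widehat{\CC}$ with the endpoints of $\widetilde{g}$, and with the alternating pattern required for the two geodesics to intersect transversely in $\HH^3$. The perturbation is parameterized by small changes in the matrix entries of $\rho(\gamma)$ and $\rho(\delta)$, which move the centers of the isometric spheres and hence the endpoints of all lifts of $\widetilde{g}$ continuously; by the intermediate value theorem applied to a suitable linking quantity in $\widehat{\CC}$, one can arrange the transverse crossing of $\widetilde{g}$ and $h\widetilde{g}$.

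The main obstacle is maintaining the minimally parabolic geometrically finite structure with simple Ford domain throughout the deformation. This is resolved by observing that the conditions required to apply Lemma~\ref{lemma:SimpleFord}---pairwise disjointness of the isometric spheres $I(\rho(\gamma^{\pm 1})), I(\rho(\delta^{\pm 1}))$ together with their $\Gamma_\infty$--translates, plus the containment condition needed for Lemma~\ref{lem:DualToInvisibleSphere}---are all open conditions in the representation variety. Thus sufficiently small perturbations preserve the required structural properties, and the Poincar\'e polyhedron theorem (as invoked in the proof of Lemma~\ref{lemma:SimpleFord}) certifies that the perturbed $\rho'$ is still a minimally parabolic geometrically finite uniformization of the $(1;3)$--compression body. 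Selecting a perturbation direction that also produces the transverse crossing of the two lifts yields the desired representation $\rho$ and generator $\xi$ whose dual geodesic in the quotient manifold self--intersects.
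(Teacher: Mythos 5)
Your overall strategy is the same as the paper's: exhibit a generator whose isometric sphere is hidden inside a visible one, invoke Lemma~\ref{lem:DualToInvisibleSphere} to identify the several lifts of the dual geodesic, and then use continuity plus the intermediate value theorem to force two of those lifts to cross. Your verification that $I(\rho(\gamma\delta^{-1}))$ sits inside the half--ball bounded by $I(\rho(\delta^{-1}))$ in Example~\ref{ex:SimpleFord}, and that the visibility hypotheses of Lemma~\ref{lem:DualToInvisibleSphere} are met with $\xi$ and $\delta^{-1}$ in the roles of $\gamma$ and $\delta$, is correct. The problem is that the step that actually produces the self--intersection is asserted rather than carried out, and as framed it cannot work: a \emph{small} perturbation of $\rho$ can only create a transverse crossing of $\widetilde{g}$ with a translate if the unperturbed configuration is already arbitrarily close to crossing, and nothing in Example~\ref{ex:SimpleFord} establishes that. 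The intermediate value theorem needs two endpoints at which your ``linking quantity'' has opposite signs, joined by a path of representations along which the simple Ford domain condition persists. You have exhibited neither the endpoints nor the sign computation, so the existence claim is not yet proved. There is also a tension in your write-up between ``sufficiently small perturbations preserve the structure'' and ``select a perturbation that produces the crossing'': openness of the structural conditions buys you the first, but the second is a codimension--one condition that a generic small perturbation will simply miss.

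The paper closes exactly this gap by making the deformation macroscopic and explicit. It builds a one--parameter family $\rho_t$, $t\in[0,4]$, in which the center of one isometric sphere moves from $5-2i$ to $5+2i$ (crossing the real axis), checks by explicit distance estimates that all isometric spheres and their $\Gamma_\infty$--translates remain pairwise disjoint for every $t$ (so Lemma~\ref{lemma:SimpleFord} applies throughout, with no appeal to openness needed), identifies the three relevant points $p_1(t),p_2(t),p_3(t)$ from Lemma~\ref{lem:DualToInvisibleSphere}, and computes that the signed area $A(t)$ of the triangle they span satisfies $A(0)>0$ and $A(4)<0$. The collinearity at the intermediate $t_0$ is what yields the crossing of the vertical lift with the non-vertical lift. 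To repair your argument you would need to do the analogous work: write down an explicit path (not a germ of a perturbation) of representations starting at your modified Example~\ref{ex:SimpleFord}, verify the disjointness of isometric spheres along the entire path, and compute the sign of your linking quantity at both ends.
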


\begin{proof}
We prove this by giving a specific example.
Recall that $\pi_1(C) \cong (\ZZ \times \ZZ) * \ZZ * \ZZ$.  We will
let $\alpha$, $\beta$, $\gamma_1$, and $\gamma_2$ generate $\pi_1(C)$,
such that $\alpha$ and $\beta$ generate the $\ZZ \times \ZZ$ subgroup.

We will consider a family of representations $\rho_t\co \pi_1(C) \to
\PSL(2,\CC)$ for which $\rho_t(\alpha)$, $\rho_t(\beta)$, and
$\rho_t(\gamma_1)$ are constant, and $\rho_t(\gamma_2)$ varies.  For
this example,
\[ \begin{array}{lcllcl}
\vspace{2mm} \rho_t(\alpha) & = & \begin{bmatrix}1 & 20 \\ 0 &
  1 \end{bmatrix},
& \rho_t(\beta) & = & \begin{bmatrix}20i & 1\\ 0 & 1 \end{bmatrix}
\end{array}
\]
These values of $\rho_t(\alpha)$ and $\rho_t(\beta)$ are chosen so
that the translation distances are large, basically so that we can
ignore the effect of these two elements on the changing Ford domain.

We let $B$ be the translation matrix $B = \begin{bmatrix} 1& 10\\ 0&
  1\end{bmatrix}$.  

We obtain $\rho_t(\gamma_1)$ by conjugation by $B$, and we set
$\rho_t(\gamma_2)$ to vary with $t$:
\[
\rho(\gamma_1) = B \begin{bmatrix} 0 & 1\\ -1 & 5-2i \end{bmatrix}
B^{-1}, \quad
\rho_t(\gamma_2) = \begin{bmatrix} 0 & 1\\ -1 & 5 +(t-2)i \end{bmatrix}.
\]

Note that the isometric spheres corresponding to $\rho_t(\gamma_1)$,
$\rho_t(\gamma_1\inv)$, $\rho_t(\gamma_2)$, and $\rho_t(\gamma_2\inv)$
all have radius $1$, and centers at $10$, $15-2i$, $0$, and $5+
(t-2)i$, respectively.  Hence for all $t \in [0,4]$, none of these
isometric spheres intersect.  Similarly, since the translation
distances of $\rho_t(\alpha)$ and $\rho_t(\beta)$ are large, no
translates of these isometric spheres under $\Gamma_\infty = \langle
\rho_t(\alpha), \rho_t(\beta)\rangle$ will intersect.  So a vertical
fundamental domain as well as isometric spheres corresponding to
$\rho_t(\gamma_1^{\pm 1})$ and $\rho_t(\gamma_2^{\pm 1})$ cut out a
simple Ford domain, and by Lemma \ref{lemma:SimpleFord}, this is a
minimally parabolic geometrically finite uniformization for $C$.

Now set $\delta_1 = \gamma_1$, and $\delta_2 = \gamma_2\inv \gamma_1$.
Then $\alpha, \beta, \delta_1, \delta_2$ generate $\pi_1(C)$.
Moreover, the isometric sphere corresponding to $\rho_t(\delta_2)$ will
be contained in the Euclidean half--ball bounded by the isometric
sphere corresponding to $\rho_t(\delta_1)$.  See Figure
\ref{fig:CrossingGeodesic}.  

\begin{figure}
\begin{center}
  \input{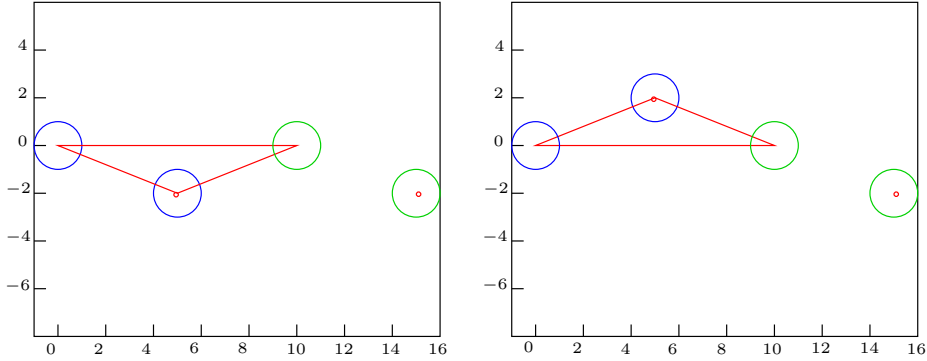}
\caption{When $t = 0$, the Ford domain is as pictured on the
  left. When $t = 4$, the Ford domain is as pictured on the right.}
\label{fig:CrossingGeodesic}
\end{center}
\end{figure}

Hence we have exactly the setup of Lemma
\ref{lem:DualToInvisibleSphere}, with $\delta_1$ playing the role of
$\delta$, and $\delta_2$ playing the role of $\gamma$.  Thus under the
action of $\rho_t(\pi_1(C))$, a portion of the geodesic dual to the
isometric sphere of $\rho(\delta_2)$ is mapped to a geodesic running
from a point $p_1(t)$ on the isometric sphere of $\rho_t(\delta_1\inv)
= \rho(\gamma_1\inv)$ to a point $p_2(t)$ on the isometric sphere of
$\rho_t(\delta_2 \delta_1\inv) = \rho_t(\gamma_2\inv)$.  

Define $p_3(t)$ to be the intersection of the geodesic dual to the
isometric sphere of $\rho_t(\delta_2\inv)$ with the isometric sphere
of $\rho_t(\gamma_2)$.
For each $t$ define a Euclidean triangle $T_t$ whose vertices are the
projections of $p_1(t)$, $p_2(t)$, and $p_3(t)$ onto $\CC$.  

Let the function $A\co [0,4] \to \RR$ give the signed area of $T_t$.
Carefully,
$$A(t) = \frac{1}{2} ( (p_1(t) - p_3(t) ) \times (p_2(t) - p_3(t))),$$
where $p_i(t) - p_j(t)$ is understood to be the 3--dimensional
Euclidean vector whose first two coordinates come from the
2--dimensional subtraction, and whose 3rd coordinate is zero, and
$\times$ denotes the usual Euclidean cross product on $\RR^3$.
Because the points $p_1(t), p_2(t)$ and $p_3(t)$ vary continuously
with $t$, $A(t)$ is a continuous function.

As can be seen in Figure~\ref{fig:CrossingGeodesic}, when $t = 0$ we
have $A(0) > 0$ since $p_3(0)$ must be below the line segment from
$p_1(0)$ to $p_2(0)$.  When $t = 4$ we obtain $A(4) < 0$, since
$p_3(4)$ must be above the line segment.  The Intermediate Value
Theorem guarantees that there is some $t_0 \in [0,4]$ for which
$A(t_0) = 0$, i.e.\ $p_1(t_0)$, $p_2(t_0)$ and $p_3(t_0)$ are colinear.
Hence when $t = t_0$, two of the geodesic arcs guaranteed by Lemma
\ref{lem:DualToInvisibleSphere} will intersect.  Thus the image of the
geometric dual to the isometric sphere of $\rho_{t_0}(\delta_2)$, 
under the action of $\rho_{t_0}(\pi_1(C))$, will have a
self--intersection.  
\end{proof}

We may generalize Proposition \ref{prop:OneIntersectingGeodesic} to
$(1; n+1)$--compression bodies.  The following result is Theorem
\ref{thm:main-compress} from the introduction, restated.

\begin{theorem}\label{thm:IntersectingGeodesics}
There exists a geometrically finite, minimally parabolic
uniformization $\rho$ of a $(1;n+1)$--compression body $C_n$ and a
choice of free generators $\delta_1,\hdots, \delta_n$ of $\pi_1(C_n)$
such that the geodesics $\tau_1, \dots, \tau_{n-1}$ obtained from the
geometric duals to isometric spheres corresponding to $\rho(\delta_1),
\dots, \rho(\delta_{n-1})$, respectively, each self--intersect.
\end{theorem}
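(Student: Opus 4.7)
The plan is to construct $n-1$ essentially independent copies of the configuration of Proposition \ref{prop:OneIntersectingGeodesic} and invoke the intermediate value argument once for each copy. The key algebraic observation is that in the Proposition, after the change of basis $\delta_1 = \gamma_1$, $\delta_2 = \gamma_2\inv\gamma_1$, the relation $\delta_2\delta_1\inv = \gamma_2\inv$ forces the face $I(\rho(\delta_2\delta_1\inv))$ to coincide with the visible face $I(\rho(\gamma_2\inv))$ of the original simple Ford domain, so the hypotheses of Lemma \ref{lem:DualToInvisibleSphere} are automatically satisfied as soon as $I(\rho(\delta_2))$ is arranged to sit inside the half--ball of $I(\rho(\delta_1))$.

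First, I would set up a family of representations $\rho_t\co \pi_1(C_n)\to\PSL(2,\CC)$ parametrised by $t=(t_2,\dots,t_n)\in [0,4]^{n-1}$. As in the Proposition, take $\rho(\alpha)$ and $\rho(\beta)$ to be parabolics fixing $\infty$ with very large translation lengths, and take $\rho(\gamma_1)$ to be a fixed loxodromic ``container'' generator with sufficiently large isometric sphere. For each $j=2,\dots,n$, let $\rho_t(\gamma_j)$ be a $\Gamma_\infty$--translate of the matrix $\rho_{t_j}(\gamma_2)$ from the Proposition, with the $\Gamma_\infty$--translates chosen far enough apart that for all $t\in[0,4]^{n-1}$ the isometric spheres $I(\rho_t(\gamma_j^{\pm1}))$, $j=1,\dots,n$, together with all their $\Gamma_\infty$--translates, remain pairwise disjoint. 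Lemma \ref{lemma:SimpleFord} then provides a minimally parabolic, geometrically finite uniformization of $C_n$ with a simple Ford domain cut out by these spheres.

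Next, set $\delta_1=\gamma_1$ and $\delta_j=\gamma_j\inv\gamma_1$ for $j=2,\dots,n$; these still freely generate the free factor of $\pi_1(C_n)$. After (if necessary) rescaling so that $I(\rho_t(\delta_j))$ lies inside the half--ball bounded by $I(\rho_t(\delta_1))$ for every $j\ge 2$ and every $t$ in the parameter box, Lemma \ref{lem:DualToInvisibleSphere} applies with $\delta_1$ in the role of $\delta$ and $\delta_j$ in the role of $\gamma$. The crucial decoupling is that $\rho_t(\delta_j)=\rho_t(\gamma_j\inv)\rho_t(\gamma_1)$ depends only on the single parameter $t_j$, so the geodesic dual to $I(\rho_t(\delta_j))$, together with the triangle vertices $p_1^{(j)}(t_j)$, $p_2^{(j)}(t_j)$, $p_3^{(j)}(t_j)$ and the signed area $A_j(t_j)$ defined exactly as in the Proposition, is a function of $t_j$ alone.

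Finally, apply the intermediate value argument of Proposition \ref{prop:OneIntersectingGeodesic} to each coordinate in turn: the same computation gives $A_j(0)>0$ and $A_j(4)<0$, hence some $t_j^\ast\in(0,4)$ with $A_j(t_j^\ast)=0$, at which value the dual geodesic to $I(\rho_{t^\ast}(\delta_j))$ self--intersects. At $t^\ast=(t_2^\ast,\dots,t_n^\ast)$, all $n-1$ duals self--intersect simultaneously; relabeling so that these become $\delta_1,\dots,\delta_{n-1}$ (and the container generator becomes $\delta_n$) yields the theorem. The main obstacle I expect is bookkeeping the disjointness of isometric spheres across the whole parameter box $[0,4]^{n-1}$; this is controllable by taking the translation lengths of $\rho(\alpha)$ and $\rho(\beta)$, and the mutual $\Gamma_\infty$--separation of the $\gamma_j$'s, sufficiently large relative to the radii of the loxodromic isometric spheres.
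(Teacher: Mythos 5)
Your strategy is the same as the paper's: build an $(n-1)$--parameter family of simple Ford domains in which each non-container generator is a horizontally translated copy of the matrix from Proposition \ref{prop:OneIntersectingGeodesic}, pass to $\delta_j=\gamma_j\inv\gamma_{\mathrm{container}}$ (the paper uses $\gamma_n$ as the container with $t_n=2$ fixed; your use of $\gamma_1$ is an immaterial relabeling), note that $\rho_t(\delta_j)$ depends on $t_j$ alone, and run the intermediate value argument one coordinate at a time. Your observation that $\delta_j\delta_{\mathrm{container}}\inv=\gamma_j^{\pm 1}$ is already a visible face of the simple Ford domain, so the hypotheses of Lemma \ref{lem:DualToInvisibleSphere} come for free, is exactly the point the paper exploits.

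One slip needs fixing: you take the $\rho_t(\gamma_j)$ to be \emph{$\Gamma_\infty$--translates} of the single matrix $\rho_{t_j}(\gamma_2)$. Read literally, this destroys faithfulness: if $\rho_t(\gamma_j)=\rho_t(w_j)\,\rho_{t_j}(\gamma_2)\,\rho_t(w_j)\inv$ with $w_j\in\langle\alpha,\beta\rangle$, then the image group is generated by $\rho_t(\alpha),\rho_t(\beta),\rho_t(\gamma_1),\rho_t(\gamma_2)$ alone, the words $\gamma_j w_j\gamma_2\inv w_j\inv$ lie in the kernel, and the spheres $I(\rho_t(\gamma_j^{\pm1}))$ are $\Gamma_\infty$--translates of $I(\rho_t(\gamma_2^{\pm1}))$ and hence get identified to the \emph{same} faces of the Ford domain; the quotient is then a $(1;3)$--compression body, not a $(1;n+1)$--compression body. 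The conjugating translations must lie \emph{outside} $\Gamma_\infty$: the paper conjugates by powers of $A=\left(\begin{smallmatrix}1&10\\0&1\end{smallmatrix}\right)$ while choosing $\rho(\alpha)$ to translate by $11n$, precisely so that $A^{k-1}\nin\Gamma_\infty$ for $2\le k\le n$ while every $\Gamma_\infty$--translate of every one of the $2n$ spheres stays disjoint from the others. With that correction (and you can drop the ``large container sphere'' and ``rescaling'' hedges: the containment $I(\rho_t(\delta_j))\subset B_{\rho_t(\delta_1)}$ is automatic once the $I(\rho_t(\gamma_i^{\pm1}))$ are pairwise disjoint, since the center of $I(\rho_t(\delta_j))$ is $\rho_t(\gamma_1\inv)(\gamma_j(\infty))$), your argument is the paper's proof.
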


\begin{proof}
As above, let $\alpha, \beta, \gamma_1, \dots, \gamma_n$ generate
$\pi_1(C_n)$, with $\alpha$ and $\beta$ generating the $\ZZ \times
\ZZ$ subgroup.  Set $A = \begin{bmatrix} 1 & 10\\ 0 &
  1 \end{bmatrix}.$

Let $t = (t_1, \dots, t_n) \in [0,4]\times \dots \times [0,4] \times
\{2\}$, and consider the $n$--parameter family of representations:
\[
\rho_t(\gamma_k) = A^{k-1}
\begin{bmatrix} 0 & 1 \\ -1 & 5+(t_k-2)i \end{bmatrix} A^{-(k-1)},
\quad \mbox{for } 1 \leq k \leq n,
\]
\[
\rho_t(\alpha) = \begin{bmatrix} 1 & 11n \\ 0 & 1 \end{bmatrix}, \quad
\rho_t(\beta) = \begin{bmatrix} 1 & 10i\\ 0 & 1 \end{bmatrix}.
\]

Note the isometric sphere corresponding to $\rho_t(\gamma_k\inv)$ has
radius $1$, and center $10(k-1)$.  The isometric sphere corresponding
to $\rho_t(\gamma_k)$ also has radius $1$, and center
$$5+ 10(k-1) + (t_k-2)i.$$ Hence for $t \in [0,4]\times \dots \times
[0,4] \times \{2\}$, all these isometric spheres are disjoint.
Moreover, $\rho_t(\alpha)$ and $\rho_t(\beta)$ are chosen to be large
enough so that the parabolic translates of these isometric spheres do
not intersect.  Thus a vertical fundamental domain as well as the
isometric spheres corresponding to $\rho_g(\gamma_k^{\pm 1})$ cut out
a simple Ford domain, and this is a minimally parabolic geometrically
finite uniformization for $C_n$.

Set $\delta_k = \gamma_k\inv \gamma_n$ for $1 \leq k <n$ and $\delta_n
= \gamma_n$.  The elements $\delta_1$, $\hdots$, $\delta_n$, $\alpha$,
$\beta$ still generate $\pi_1(C)$, but for $1 \leq k < n$, the
isometric sphere corresponding to $\delta_k\inv$ is not visible: it is
contained in the Euclidean half--ball bounded by the isometric sphere
corresponding to $\gamma_k$.  Similarly, for $1 \leq k < n$, the
isometric sphere corresponding to $\delta_k$ is contained in the
Euclidean half--ball bounded by the isometric sphere corresponding to
$\gamma_n$.  Thus Lemma \ref{lem:DualToInvisibleSphere} implies that
the geodesic running from the center of $I(\rho_t(\gamma_k\inv))$ to
the center of $I(\rho_t(\gamma_n\inv))$ maps to the image of the
geodesic dual to $I(\rho_t(\delta_k))$.

Now apply a similar argument to that in the previous proof.  The
Intermediate Value Theorem implies that for each $k$, $1 \leq k < n$,
there must be a $t_k \in [0,4]$ such that the geodesic dual to
$I({\rho_t(\delta_k)})$ has self--intersecting image.  Because for
$i\neq k$, varying $t_k$ has no effect on isometric spheres
corresponding to $\delta_k^{\pm 1}$, we may perform the above
procedure for each $k$ one at a time, to obtain the desired
uniformization.  
\end{proof}


\section{Finite volume tunnel number--$n$ manifolds}\label{sec:ftevol}
In this section, we use the results of the previous section to give
evidence that there exist tunnel number--$n$ manifolds with finite
volume and tunnel systems that come arbitrarily close to
self--intersecting.  

The rough idea of the proof is to take the compression body of Theorem
\ref{thm:IntersectingGeodesics}, and attach a handlebody to it in such
a way that the geometry of the compression body after attaching is
``close'' to the geometry before attaching.  This is accomplished in a
manner similar to that of Cooper, Lackenby, and Purcell in
\cite{CLP:LengthPaper}.

\subsection{Maximally cusped structures}

We recall definitions and results on maximally cusped geometrically
finite structures, because we will use these structures to build
manifolds with nearly self--intersecting tunnels.

\begin{definition}
A \emph{maximally cusped structure for $C$} is a geometrically finite
uniformization $\rho \co \pi_1(C) \to \PSL(2,\CC)$ of $C$ such that
every component of the boundary of the convex core of $\HH^3 /
\rho(\pi_1(C))$ is a 3--punctured sphere.
\end{definition}

In a maximally cusped structure for $C$, a full pants decomposition of
$\partial_+C$ is pinched to parabolic elements.  A theorem of Canary,
Culler, Hersonsky, and Shalen \cite{cchs}, extending work of McMullen
\cite{McMullen}, shows that the conjugacy classes of maximally cusped
structures for $C$ are dense on the boundary of all geometrically
finite structures on $C$.  To make this statement more precise, we
review the following definitions.

\begin{definition}
The \textit{representation variety} $V(C)$ of a compression body $C$
is the space of conjugacy classes of representations $\rho\co \pi_1(C)
\to \PSL(2,\CC)$, where $\rho$ sends elements of $\pi_1(\partial_-C)$
to parabolics.  (This definition is similar to one given by Marden in
\cite{OuterCircles}, and is more restrictive than one found in
\cite{CullerShalen}.)  Convergence in $V(C)$ is defined by algebraic
convergence.  We denote the subset of conjugacy classes of minimally
parabolic geometrically finite uniformizations of $C$ by $GF_0(C)
\subseteq V(C)$.  We will give $GF_0(C)$ the algebraic topology.
Marden \cite{Marden} showed that $GF_0(C)$ is open in $V(C)$.
\end{definition}

By \cite{cchs}, conjugacy classes of maximally cusped structures are
dense in the boundary of $GF_0(C)$ in $V(C)$.

Now, we need to recognize indiscrete representations $\rho \co
\pi_1(C) \to \PSL(2,\CC)$.  The following lemma, which is essentially
the Shimizu--Leutbecher lemma \cite[Proposition II.C.5]{maskit},
allows us to do so.  A proof using the notation of this paper can be
found in \cite{CLP:LengthPaper}.

\begin{lemma}\label{lem:MinimalTranslationLength}
Let $\Gamma$ be a discrete torsion free subgroup of $\PSL(2,\CC)$ such
that $M = \HH^3/\Gamma$ has a rank two cusp.  Suppose the point at
$\infty$ projects to the cusp, and $\Gamma_\infty \leq \Gamma$ is the
subgroup of parabolics fixing $\infty$. Then for every $\gamma \in
\Gamma \backslash \Gamma_\infty$ the isometric sphere $I(\gamma)$ has
radius at most $T$, where $T$ is the minimal Euclidean translation
length of all elements of $\Gamma_\infty$.
\end{lemma}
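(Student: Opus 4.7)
The plan is to argue by contradiction along the lines of the classical Shimizu--Leutbecher proof. Suppose some $\gamma \in \Gamma \setminus \Gamma_\infty$ has an isometric sphere of radius exceeding $T$. Writing $\gamma = \begin{bmatrix} a & b \\ c & d \end{bmatrix}$ with $ad - bc = 1$ and $c \neq 0$, the assumption becomes $1/|c| > T$, i.e.\ $|c| T < 1$. Choose $\tau \in \Gamma_\infty$ realizing the minimal translation length $T$. After conjugating $\Gamma$ by an element of $\PSL(2,\CC)$ fixing $\infty$ (which does not change the radius of any isometric sphere or the discreteness of $\Gamma$), I may take $\tau = \begin{bmatrix} 1 & t \\ 0 & 1 \end{bmatrix}$ with $|t| = T$.

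Next, I would iterate the conjugation of $\tau$ by $\gamma$. Set $\gamma_0 = \gamma$ and $\gamma_{n+1} = \gamma_n \tau \gamma_n^{-1}$. A direct matrix calculation gives
\[
\gamma_n \tau \gamma_n^{-1} = \begin{bmatrix} 1 - a_n c_n t & a_n^{\,2} t \\ -c_n^{\,2} t & 1 + a_n c_n t \end{bmatrix},
\]
so in particular $c_{n+1} = -c_n^{\,2}\, t$. Inducting gives $|c_n| T = (|c| T)^{2^n}$, which tends to $0$ doubly exponentially under the standing assumption $|c| T < 1$. Moreover each $c_n \neq 0$, so every $\gamma_n$ lies in $\Gamma \setminus \Gamma_\infty$ and is distinct from $\tau$.

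The remaining step is to show that the $\gamma_n$ accumulate in $\PSL(2,\CC)$, contradicting discreteness of $\Gamma$. Using $|c_n| T \to 0$, one proves by induction that the sequence $|a_n|$ is bounded (the recursion $|a_{n+1}| \le 1 + |a_n|\,|c_n|\,T$ is controlled by summability of $\sum (|c|T)^{2^n}$). Once $|a_n|$ is bounded, $a_n c_n t \to 0$ forces $a_n, d_n \to 1$, and then $b_{n+1} = a_n^{\,2} t \to t$. Hence $\gamma_n \to \tau$ in $\PSL(2,\CC)$, giving an accumulation of distinct elements of $\Gamma$ and contradicting discreteness.

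The main obstacle I anticipate is the bookkeeping in this final step: while $|c_n|$ collapses doubly exponentially, the entries $a_n$ (and dually $b_n$) could a priori grow, and one must establish a clean inductive bound to close the argument. Once the convergence $\gamma_n \to \tau$ is in hand, the discreteness contradiction is immediate and the bound on the radius follows.
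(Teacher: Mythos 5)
Your argument is correct and is precisely the classical Shimizu--Leutbecher iteration ($\gamma_{n+1}=\gamma_n\tau\gamma_n^{-1}$, with $|c_{n+1}|T=(|c_n|T)^2$ forcing $\gamma_n\to\tau$ and contradicting discreteness), which is exactly the approach of the sources the paper cites for this lemma (Maskit, Prop.\ II.C.5, and Cooper--Lackenby--Purcell); the paper itself gives no independent proof. The inductive bound on $|a_n|$ that you flag as the delicate point does close cleanly, since $\sum_n(|c|T)^{2^n}<\infty$ makes the product $\prod_n\bigl(1+(|c|T)^{2^n}\bigr)$ converge.
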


Using the above lemma, we can show the following.  

\begin{lemma}\label{lemma:MaximallyCusped}
For any $\epsilon>0$, there exists a maximally cusped structure on the
$(1, n+1)$--compression body $C$, a system of core tunnels $\tau_1,
\dots, \tau_n$ for $C$, and balls $B_1(\epsilon), \dots,
B_{n-1}(\epsilon)$ each of radius $\epsilon$ such that for $i = 1,
\dots, n-1$, the tunnel $\tau_i$ intersects the ball $B_i(\epsilon)$
in two distinct arcs.  Hence $n-1$ of the $n$ tunnels come within
distance at most $\epsilon$ of self--intersecting.
\end{lemma}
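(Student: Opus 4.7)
The plan is to combine Theorem \ref{thm:IntersectingGeodesics} with the density of maximally cusped structures in $\partial GF_0(C)$. Start with the representation $\rho_0 \in GF_0(C)$ from Theorem \ref{thm:IntersectingGeodesics} for which the tunnels $\tau_1, \dots, \tau_{n-1}$ dual to $I(\rho_0(\delta_1)), \dots, I(\rho_0(\delta_{n-1}))$ actually self-intersect. Since the geodesic dual to $I(\rho(\delta_i))$ varies continuously with $\rho$ in the algebraic topology on $V(C)$, the condition that $\tau_i$ contains two distinct subarcs inside some ball of radius $\epsilon$ is an open condition. Intersecting these conditions for $i = 1, \dots, n-1$ yields an open neighborhood $U$ of $\rho_0$ in $V(C)$ in which every representation exhibits $\epsilon$-near-self-intersection on all $n-1$ tunnels simultaneously.

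Next I would deform $\rho_0$ along a short path of representations reaching a point $\rho_1 \in \partial GF_0(C) \cap U$, for instance by pinching a pants decomposition of $\partial_+ C$ along a ray of representations. Throughout this deformation I would keep the parabolic translation lengths of $\rho(\alpha)$ and $\rho(\beta)$ large so that, by Lemma \ref{lem:MinimalTranslationLength}, the radii of every isometric sphere that appears remain uniformly bounded and the Ford domain does not degenerate. This ensures that each geodesic dual $\tilde d_i$ varies continuously up to and including the boundary representation $\rho_1$. Applying the Canary--Culler--Hersonsky--Shalen density theorem \cite{cchs}, I would then find a maximally cusped structure $\rho \in \partial GF_0(C) \cap U$ arbitrarily close to $\rho_1$. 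By definition of $U$, the tunnels $\tau_1, \dots, \tau_{n-1}$ in the quotient $\HH^3/\rho(\pi_1(C))$ satisfy the required $\epsilon$-near-self-intersection condition, and $\tau_n$ (dual to $I(\rho(\delta_n))$) completes the tunnel system via Lemma \ref{lemma:DualsAreTunnels}.

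The main obstacle is arranging that a suitable path from $\rho_0$ to $\partial GF_0(C)$ actually stays inside $U$, since starting from an interior point of $GF_0(C)$ the boundary could a priori be far away in the algebraic topology. The construction has enough freedom to overcome this: one can either pick $\rho_0$ from Theorem \ref{thm:IntersectingGeodesics} so that some conjugacy class of curves in $\partial_+ C$ is already nearly pinched, or choose the pinching pants decomposition so that the required deformation is small. A secondary subtlety is that algebraic convergence controls only individual conjugacy classes and not the global Ford domain, so the identification of each dual geodesic with a core tunnel in the limit is not automatic; this is precisely where Lemma \ref{lem:MinimalTranslationLength} is invoked, providing the uniform geometric control needed for the dual geodesics to persist under the algebraic limit to a maximally cusped representation.
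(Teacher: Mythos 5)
There is a genuine gap, and you have in fact named it yourself: nothing in your argument guarantees that $\partial GF_0(C)$ meets the open neighborhood $U$ of $\rho_0$. The set $U$ you construct is an open neighborhood of a point in the \emph{interior} of $GF_0(C)$ (recall $GF_0(C)$ is open in $V(C)$ by Marden), and the representation $\rho_0$ produced by Theorem \ref{thm:IntersectingGeodesics} is built with deliberately large parabolic translation lengths, so it is not ``nearly pinched'' in any useful sense. Your two proposed remedies --- choosing $\rho_0$ already close to the boundary, or choosing the pinching so that ``the required deformation is small'' --- are hopes rather than arguments: the near-self-intersection neighborhood $U$ could be arbitrarily small, and a pinching deformation must move some trace to infinity (or to a parabolic value), which is not a small perturbation in the algebraic topology. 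A secondary misattribution: Lemma \ref{lem:MinimalTranslationLength} does not provide ``uniform geometric control for the dual geodesics to persist under the algebraic limit''; in the paper its role is to certify that shrinking a parabolic translation length below an isometric sphere radius forces indiscreteness, which is what guarantees a path starting at $\rho_0$ must actually cross $\partial GF_0(C)$.

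The paper closes the gap with a device your proposal is missing. Define $\mathcal{R}$ to be the slice of $V(C)$ on which $\rho(\delta_i)=\rho_0(\delta_i)$ for all $i$ and only $\rho(\alpha),\rho(\beta)$ vary (as parabolics fixing $\infty$ with bounded translation length). The self-intersecting configuration --- the geodesic dual to $I(\rho(\delta_i))$ meeting the geodesic between the centers of $I(\rho(\delta_i\delta_n\inv))$ and $I(\rho(\delta_n\inv))$ --- depends only on the images of the $\delta_j$, so it is preserved \emph{exactly}, not just on an open set, for every geometrically finite $\rho\in\mathcal{R}$. One then walks inside $\mathcal{R}$ by shrinking the translation length of $\rho(\alpha)$ or $\rho(\beta)$ until Lemma \ref{lem:MinimalTranslationLength} forces indiscreteness; the path therefore crosses $\partial GF_0(C)$ at some $\rho_\infty$ which still carries the exactly-intersecting geodesics. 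Only at this final stage is an approximation made: maximally cusped structures $\rho_k\to\rho_\infty$ are chosen by density \cite{cchs}, and since the isometric sphere centers $\rho_k(\delta_j^{\pm1})(\infty)$ converge, the two relevant geodesic arcs come within $\epsilon/2$ of one another for large $k$, producing the balls $B_i(\epsilon)$. In short: your openness observation is the right last step, but it must be applied at a boundary point where the intersection genuinely holds, and producing such a boundary point is the content of the proof, not a detail.
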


\begin{proof}
Let $\rho_0$ be the geometrically finite representation of the
$(1,n+1)$--compression body constructed in Theorem
\ref{thm:IntersectingGeodesics}, with generators $\alpha, \beta,
\delta_1, \dots, \delta_n$ for $\pi_1(C)$, and where the geodesic
duals to the isometric spheres corresponding to $\rho_0(\delta_1),
\dots, \rho_0(\delta_{n-1})$ glue up to self--intersect.  We need to
recall a bit more detail about where the intersection occurs.  Recall
from the proof of Theorem \ref{thm:IntersectingGeodesics} that in the
universal cover $\HH^3$, for $i=1, \dots, n-1$, the geodesic dual to
the isometric sphere $I(\rho_0(\delta_i))$ intersects the geodesic
running from the center of the isometric sphere $I(\rho_0(\delta_i
\delta_n\inv))$ to the center of the isometric sphere
$I(\rho_0(\delta_n\inv))$, and that these two geodesics have the same
image in $\HH^3/ \rho_0(\pi_1(C))$.

Now, the translation lengths of $\rho_0(\alpha)$ and $\rho_0(\beta)$
are bounded by some number $L$.  We can consider $\rho_0$ as an
element of $V(C)$.  Let $\mathcal{R}$ be the set of all
representations $\rho$ of $\pi_1(C)$ where $\rho(\alpha), \rho(\beta)$
are parabolics fixing infinity with translation length bounded by $L$,
and $\rho(\delta_i) = \rho_0(\delta_i)$.  By suitably normalizing
$\rho(\alpha), \rho(\beta)$ to avoid conjugation, we can view
$\mathcal{R}$ as a subset of $V(C)$.  Note that $\rho_0 \in
\mathcal{R}$.

Also note that for any geometrically finite structure $\rho$ in
$\mathcal{R}$, and for all $i=1, \dots, n-1$, the geodesic arc dual to
the isometric sphere $I({\rho(\delta_i)})$ will intersect the geodesic
running from the center of $I(\rho(\delta_i\delta_n\inv))$ to the
center of $I(\rho(\delta_n\inv))$, and again these geodesics have the
same image in $\HH^3/\rho(\pi_1(C))$, giving self--intersecting
tunnels in each of these structures.

Now, there exists a path in $\mathcal{R}$ from $\rho_0$ to some
indiscrete representation.  Such a path is obtained by decreasing the
minimal translation length of $\rho(\alpha)$ or $\rho(\beta)$ until it
becomes smaller than the radius of an isometric sphere.  This gives an
indiscrete structure by Lemma \ref{lem:MinimalTranslationLength}.
Hence this path intersects $\partial GF_0(C)$ at some point, say
$\rho_\infty$.

Maximally cusped structures are dense in $\partial GF_0(C)$
\cite{cchs}.  Hence there exists a sequence of geometrically finite
representations $\rho_k$ of $\pi_1(C)$ such that the conformal
boundaries of the manifolds $C_k = \HH^3/\rho_k(\pi_1(C))$ are
maximally cusped genus $(n+1)$ surfaces, each $C_k$ is homeomorphic to
the interior of $C$, and the algebraic limit of the $\rho_k$ is
$\rho_\infty$.

Now, for each $\delta_i$, $i=1, \dots, n$, $\rho_k(\delta_i)$
converges to $\rho_\infty(\delta_i)$, hence the center of the
corresponding isometric sphere $I(\rho_k(\delta_i))$ converges to the
center of the isometric sphere $I(\rho_\infty(\delta_i))$.  Similarly,
the centers of the isometric spheres $I(\rho_k(\delta_i\delta_n\inv))$
and $I(\rho_k(\delta_n\inv))$ converge to the centers of the isometric
spheres $I(\rho_\infty(\delta_i\delta_n\inv))$ and
$I(\rho_\infty(\delta_n\inv))$, respectively.

Hence for any $\epsilon>0$, there exists $K>0$ such that if $k>K$, the
geodesic $\tau_k$ dual to $I(\rho_k(\delta_i))$, and its image
$\rho_k(\delta_i\inv\delta_n)(\tau_k)$, with endpoints at the centers
of isometric spheres $I(\rho_k(\delta_i\delta_n\inv))$ and
$I(\rho_k(\delta_n\inv))$, are within distance $\epsilon/2$ of each
other.  Let $p_i$ be a point of distance at most $\epsilon/4$ from
both geodesics.  Note the two geodesics intersect the ball
$B_\epsilon(p_i)$ of radius $\epsilon$ in $\HH^3$.  Let
$B_i(\epsilon)$ denote the image of the ball $B_\epsilon(p_i)$ in the
quotient manifold $\HH^3 /\rho_k(\pi_1(C))$.  The image of the
geodesic $\tau_i$ runs through $B_i(\epsilon)$ in two distinct arcs,
as desired.
\end{proof}

\begin{lemma}\label{lemma:UnfilledPants}
For any $\epsilon>0$ and any integer $n\geq 2$, there exists a finite
volume hyperbolic 3--manifold $M$ with the following properties.
\begin{enumerate}
\item\label{item:1} $M$ is obtained from a manifold $\widehat{M}$
  with genus $(n+1)$ Heegaard surface $S$ by drilling out a collection
  of curves on $S$ corresponding to a full pants decomposition of $S$.
\item\label{item:2} There exists a tunnel system $\tau_1, \dots, \tau_n$ for
  $\widehat{M}$ and balls $B_1(\epsilon), \dots, B_{n-1}(\epsilon)
  \subset M$ such that for each $i=1, \dots, n-1$, the geodesic arc in
  the homotopy class of $\tau_i$ in $M$ intersects $B_i(\epsilon)$ in
  at least two nontrivial arcs.  Hence the arc comes within $\epsilon$
  of self--intersecting in $M$.
\end{enumerate}
\end{lemma}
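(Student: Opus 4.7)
The plan is to apply the Klein--Maskit combination theorem, in the spirit of \cite{CLP:LengthPaper}, to attach a geometrically finite handlebody to the maximally cusped $(1;n+1)$-compression body produced by Lemma \ref{lemma:MaximallyCusped}, and then verify that the near self-intersections of the geodesic tunnels are preserved by the gluing.

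To begin, I would take the maximally cusped geometrically finite representation $\rho$ of $\pi_1(C_n)$ supplied by Lemma \ref{lemma:MaximallyCusped}, in which a full pants decomposition $P$ of $\partial_+ C_n$ has been pinched to parabolics and in which the geodesic representatives of the tunnels $\tau_1, \dots, \tau_{n-1}$ each intersect a ball $B_i(\epsilon)$ in two distinct arcs. Next I would construct a genus $(n+1)$ handlebody $H$ equipped with a geometrically finite hyperbolic structure that is also maximally cusped along the \emph{same} pants decomposition $P \subset \partial H$; such structures exist on the boundary of Schottky space and are produced by iterated Maskit combination, using density of maximal cusps on $\partial GF_0(H)$ (the handlebody analogue of \cite{cchs}). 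The conformal boundaries of the convex cores of $C_n$ and $H$ are now matching unions of thrice-punctured spheres, and because thrice-punctured spheres are hyperbolically rigid I can glue $C_n$ to $H$ along these boundaries by the unique orientation-reversing isometry pairing the punctures. The Klein--Maskit combination theorem promotes this to a complete hyperbolic structure on a manifold $M$ whose cusps are the original rank-two cusp of $C_n$ together with one new rank-two cusp per curve of $P$; in particular $M$ has finite volume.

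Property (\ref{item:1}) then follows by observing that filling each of the new cusps along its meridian recovers the closed manifold $\widehat{M} = C_n \cup_S H$, with genus $(n+1)$ Heegaard surface $S = \partial_+ C_n = \partial H$, so that $M$ is precisely $\widehat{M}$ with the curves of $P \subset S$ drilled out. For property (\ref{item:2}), the key point is that the combination construction yields an isometric embedding of $C_n$ (with the structure of Lemma \ref{lemma:MaximallyCusped}) into $M$ which sends the convex core of $C_n$ isometrically into the convex core of $M$. Each geodesic arc $\tau_i$ lies in that convex core, so it maps to a locally geodesic arc in $M$ in the same relative homotopy class, and by uniqueness of geodesic representatives it is the geodesic representative of the tunnel $\tau_i$ in $M$. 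Hence the balls $B_i(\epsilon)$ and the two arcs of $\tau_i$ through them persist in $M$.

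The principal obstacle is the combination step itself: one must confirm that the resulting manifold is complete and of finite volume, and that $C_n$ sits inside $M$ via an isometry of convex cores so that the geodesic tunnels transfer unchanged. This is essentially the content of the analogous construction in \cite{CLP:LengthPaper}, and I would reproduce or directly invoke it in the compression body setting. A secondary technical point is arranging the matching maximally cusped structure on the handlebody $H$ with exactly the prescribed pinched pants curves $P$; this is handled by the standard density result for maximal cusps combined with an appropriate choice of marking on $\partial H$.
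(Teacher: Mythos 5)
Your proposal is correct and follows essentially the same route as the paper: glue a maximally cusped handlebody (pinched along the same pants decomposition $P$) to the maximally cusped compression body of Lemma \ref{lemma:MaximallyCusped} along the totally geodesic thrice--punctured spheres bounding the convex cores, note the result is finite volume with $3n+1$ cusps and that trivial filling recovers $\widehat{M}$ with Heegaard surface $S$, and observe that the tunnels and the balls $B_i(\epsilon)$ persist because the convex core of the compression body embeds isometrically. The only cosmetic difference is that the paper justifies the existence of the matching maximally cusped handlebody structure by Thurston's Uniformization Theorem (via Morgan) rather than by density of maximal cusps, and does not need to invoke Klein--Maskit explicitly.
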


\begin{proof}
Let $C_D$ be a maximally cusped compression body of Lemma
\ref{lemma:MaximallyCusped}.  The collection of curves on the
conformal boundary of $C_D$ forms a pants decomposition $P$ of the
genus $(n+1)$ surface.

Let $H$ be a genus $(n+1)$ handlebody.  We wish to take a maximally
cusped hyperbolic structure on $H$ for which the rank--1 cusps on
$\partial H$ consist exactly of the curves of $P$.  In fact, there are
infinitely many such structures, which follows as a consequence of
Thurston's Uniformization Theorem (see Morgan
\cite{Morgan:uniformization}).  Let $H_D$ denote one such structure.

Now, consider the convex cores of $C_D$ and of $H_D$.  The boundaries
of the convex cores consist of 3--punctured spheres, which have a
unique hyperbolic structure.  Hence we may glue $\bdy_+ C_D$ to $\bdy
H_D$ via isometry on each 3--punctured sphere, and we obtain a finite
volume hyperbolic 3--manifold $M$ with $(3n+1)$ rank--2 cusps.  One of
these cusps comes from the rank--2 cusp of $C_D$.  The other $3n$ come
from gluing together the $3n$ rank--1 cusps on the boundaries of the
convex cores of $C_D$ and $H_D$.

Note that if we do a trivial Dehn filling of the $3n$ cusps of $M$
that came from rank--1 cusps on $\bdy_+ C_D$ and $\bdy H_D$, then we
obtain a manifold $\widehat{M}$ with a genus $(n+1)$ Heegaard
splitting along a surface we denote $S$.  This gives item
\eqref{item:1}.

The core tunnels $\tau_1, \dots, \tau_n$ of $C_D$ become a system of
tunnels for $\widehat{M}$ under the gluing.  Because the gluing is
by isometry, and the balls $B_i(\epsilon)$ of Lemma
\ref{lemma:MaximallyCusped} lie within the convex core of $C_D$, for
$i=1, \dots, n-1$, the tunnels $\tau_1, \dots, \tau_{n-1}$ satisfy
item \eqref{item:2}.  
\end{proof}

\subsection{Filling the manifold}

In order to obtain a manifold with a tunnel system consisting of arcs
arbitrarily close to self--intersecting, we will take the cusped
manifold of Lemma \ref{lemma:UnfilledPants} and perform Dehn filling
on the $3n$ cusps corresponding to the pants decompositions of the
surface $S$.

\begin{lemma}\label{lemma:DehnTwist}
Let $M$ be the manifold of Lemma \ref{lemma:UnfilledPants}, and let
$S$ be the surface in item \eqref{item:1} of that lemma.  So $M$ is
homeomorphic to the interior of a compact manifold $\overline{M}$ with
torus boundary components $T_1, \dots, T_{3n+1}$, and $S\cap M$
intersects $\overline{M}$ in a surface $\overline{S}$ with boundary on
$T_1, \dots, T_{3n}$.  For each torus boundary component $T_j$,
$j=1, \dots, 3n$, we take a basis for $H_1(T_j)$ consisting of the
curves $\lambda_j$, $\mu_j$, where $\lambda_j$ is a component of
$\partial \overline{S} \cap T_j$, and $\mu_j$ is any curve with
intersection number 1 with $\lambda_j$.  Then Dehn filling
$\overline{M}$ along any slope $\mu_j + k\,\lambda_j$ will yield a
manifold with Heegaard surface $S$.
\end{lemma}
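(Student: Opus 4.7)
The plan is to interpret each Dehn filling as a modification of the Heegaard gluing of $\widehat{M}$ by Dehn twists along the pants decomposition curves $P_1,\ldots,P_{3n}$ on $S$, so that the Heegaard surface $S$ persists under the filling.

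I would begin by fixing the genus $(n+1)$ Heegaard splitting $\widehat{M} = C \cup_\phi H$ from Lemma \ref{lemma:UnfilledPants}, with $P_1, \ldots, P_{3n}$ lying on $S$. The drilled manifold $M$ then has torus boundaries $T_j = \partial N(P_j)$ for $j = 1, \ldots, 3n$, and on each $T_j$ the meridian $\mu_j$ bounds a meridional disk of the drilled solid torus $N(P_j)$, while $\lambda_j$ is isotopic to $P_j \subset S$ viewed on $\partial N(P_j)$. In particular, trivial filling (slope $\mu_j$) on each $T_j$ restores $\widehat{M}$ together with its Heegaard splitting, so the content of the lemma is that replacing $\mu_j$ by $\mu_j + k\lambda_j$ preserves the existence of a Heegaard surface $S$.

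The main step is to invoke the standard correspondence between Dehn filling slopes and Dehn twists along curves lying on a surface: whenever a simple closed curve $c$ lies on an embedded surface $\Sigma$ in a $3$--manifold $Y$, Dehn filling along slope $\mu + k\lambda$ (in the framing where $\lambda$ is the push-off of $c$ into $\partial N(c)$ along $\Sigma$) yields the manifold obtained from $Y$ by cutting along $\Sigma$ and regluing via $k$ Dehn twists along $c$. Applied simultaneously to each $T_j$ in our setting, the filled manifold $\widehat{M}_k$ is homeomorphic to $C \cup_{\phi \circ \tau} H$, where $\tau = \tau_{P_1}^{k_1} \circ \cdots \circ \tau_{P_{3n}}^{k_{3n}}$ is a product of Dehn twists of $S$. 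Since $\phi \circ \tau$ is still a homeomorphism $\partial H \to \partial_+ C$, this presents $\widehat{M}_k$ as a genus $(n+1)$ Heegaard splitting along $S$, proving the lemma.

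To verify the Dehn twist correspondence explicitly, which is the principal technical step, I would parameterize each filled solid torus as $V_j = D^2 \times S^1$ glued so that its meridian maps to $\mu_j + k_j \lambda_j$ and its $S^1$--factor to $\lambda_j$. The two components of $\partial \overline{S} \cap T_j$ are parallel copies of $\lambda_j$, so there is an embedded annulus $A_j \subset V_j$ with $\partial A_j = \partial \overline{S} \cap T_j$, obtained as $\eta_j \times S^1$ for a suitable arc $\eta_j \subset D^2$. The closed surface $S' = \overline{S} \cup \bigcup_j A_j$ in $\widehat{M}_k$ is homeomorphic to $S$, and $A_j$ divides $V_j$ into two half solid tori $V_j^C, V_j^H$. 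The attachment of $V_j^C$ (resp.\ $V_j^H$) to the $C$--side (resp.\ $H$--side) of $M$ along its annular piece of $T_j$ differs from the original attachment of $N(P_j) \cap C$ (resp.\ $N(P_j) \cap H$) in $\widehat{M}$ by exactly $\tau_{P_j}^{k_j}$ along the annular neighborhood of $P_j$ in $S$. The subtle but routine step is keeping track of this gluing bookkeeping, checking that each filled half is a solid torus and that its attachment has the claimed effect on the total gluing; once that is in hand, $S'$ is a Heegaard surface of the same genus, completing the proof.
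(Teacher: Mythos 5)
Your proposal is correct and takes essentially the same approach as the paper: the paper's proof simply observes that the slope $\mu_j + k\lambda_j$ meets $S$ once and cites the well-known fact (Lickorish, Rolfsen) that such a filling acts as a Dehn twist on $S$, preserving the Heegaard surface. You carry out the verification of that standard fact explicitly (via the annuli $A_j$ in the filled solid tori), but the underlying argument is identical.
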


\begin{proof}
Any such slope has intersection number one with the surface $S$.  It
is well known that Dehn filling on such a slope acts as a Dehn twist
on the surface $S$, and the $S$ is a Heegaard surface for every such
Dehn filling (see \cite{lickorish:orientable, rolfsen}).
\end{proof}

We are now ready to prove Theorem \ref{thm:maintheorem} from the
introduction, which we restate.  

\begin{theorem}\label{thm:close-to-intersecting}
For any $\epsilon>0$ and any integer $n\geq 2$, there exists a finite
volume hyperbolic 3--manifold $M$ with a single cusp torus such that
$M$ has the following property.  It admits a system of tunnels $\{
\tau_1, \dots, \tau_n\}$, and a collection of balls $B_1(\epsilon),
\dots, B_{n-1}(\epsilon)$ of radius $\epsilon$ such that for $i=1,
\dots, n-1$, the geodesic arc homotopic to $\tau_i$ intersects
$B_i(\epsilon)$ in two distinct arcs.
\end{theorem}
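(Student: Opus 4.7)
The plan is to start with the cusped manifold $M$ produced by Lemma \ref{lemma:UnfilledPants} and perform long Dehn filling on the $3n$ cusps corresponding to the pants decomposition of the Heegaard surface $S$. The idea is that under sufficiently long Dehn filling, the filled manifold is hyperbolic, has only the original rank--two cusp remaining, admits the same tunnel system, and its geometry is close enough to that of $M$ on a compact region containing the (almost self--intersecting) geodesic arcs that these arcs continue to come $\epsilon$--close to self--intersecting.

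Concretely, first apply Lemma \ref{lemma:UnfilledPants} with $\epsilon/2$ in place of $\epsilon$. This produces a finite volume hyperbolic $M$ with $(3n+1)$ rank--two cusps, a tunnel system $\tau_1,\dots,\tau_n$ in the corresponding $\widehat{M}$, and balls $B_i(\epsilon/2)\subset M$ ($i=1,\dots,n-1$) each met by the geodesic representative of $\tau_i$ in two distinct subarcs.  By Lemma \ref{lemma:DehnTwist}, for any integers $k_1,\dots,k_{3n}$ the Dehn filling $M_{\vec k}$ of the $3n$ pants cusps of $\overline{M}$ along the slopes $\mu_j+k_j\lambda_j$ retains $S$ as a Heegaard surface, and in particular retains $\tau_1,\dots,\tau_n$ as a tunnel system.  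By Thurston's Hyperbolic Dehn Surgery Theorem, for all $\vec k$ with $\min_j|k_j|$ sufficiently large, $M_{\vec k}$ is a finite volume hyperbolic 3--manifold with a single cusp (the unfilled rank--two cusp), and the $M_{\vec k}$ converge geometrically to $M$ as $\min_j|k_j|\to\infty$.  In particular, given any compact $K\subset M$, for $\min_j|k_j|$ large enough there is a $(1+\eta)$--bilipschitz embedding $\phi_{\vec k}\co K\hookrightarrow M_{\vec k}$ with $\eta=\eta(\vec k)\to 0$.

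The final step is to show that the geodesic representatives of the $\tau_i$ in $M_{\vec k}$ remain near the images (under $\phi_{\vec k}$) of the corresponding geodesic representatives in $M$.  Choose a small horoball neighborhood $U$ of the preserved cusp of $M$, which persists in each $M_{\vec k}$ under the geometric convergence, and let $K$ be a compact set containing $M\setminus U$ together with all the balls $B_i(\epsilon/2)$ and all the geodesic arcs homotopic to $\tau_1,\dots,\tau_n$ (truncated at $\bdy U$).  Standard geometric convergence arguments show that the truncated geodesic arc in the homotopy class of $\tau_i$, with endpoints on $\bdy U$, depends continuously on the metric structure on $K$ in the Hausdorff topology; this is the content of the ``pushing geodesics'' analysis carried out for tunnels in \cite{CLP:LengthPaper}.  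Hence for $\min_j|k_j|$ sufficiently large, the geodesic representative of $\tau_i$ in $M_{\vec k}$ lies within Hausdorff distance $\epsilon/2$ of $\phi_{\vec k}$ applied to the geodesic representative of $\tau_i$ in $M$.  Combining this with the two distinct subarcs in $B_i(\epsilon/2)$ guaranteed by Lemma \ref{lemma:UnfilledPants}, we conclude that the geodesic representative of $\tau_i$ in $M_{\vec k}$ intersects a ball $B_i(\epsilon)\subset M_{\vec k}$ of radius $\epsilon$ in two distinct arcs.  Setting $M=M_{\vec k}$ for such $\vec k$ completes the proof.

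The main obstacle is the last step: one must rule out the possibility that long Dehn filling moves the geodesic representative of a tunnel arbitrarily far (for instance, by forcing it to wrap many times around a filled cusp).  Because the arcs in question have their endpoints at the \emph{preserved} rank--two cusp, and because they lie well inside the complement of the filled cusps, the geometric convergence on compact sets is enough to control them, but pinning down the correct choice of ``truncation at the cusp'' and verifying continuity of the geodesic representative under this truncation is where the technical care is required; the framework in \cite{CLP:LengthPaper} carries this out in closely analogous circumstances and can be invoked essentially verbatim.
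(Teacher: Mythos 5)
Your proposal is correct and takes essentially the same approach as the paper: start from Lemma \ref{lemma:UnfilledPants} with a shrunken $\epsilon$, Dehn fill the $3n$ pants cusps along slopes $\mu_j+k_j\lambda_j$ using Lemma \ref{lemma:DehnTwist}, and invoke geometric convergence of the fillings to control the geodesic representatives of the tunnels. The only cosmetic difference is that the paper cites the Brock--Bromberg drilling theorem for the explicit $(1+\epsilon_1)$--bilipschitz diffeomorphism off Margulis tubes (chosen to avoid the tunnels), whereas you cite Thurston's Dehn surgery theorem together with the geodesic-tracking framework of \cite{CLP:LengthPaper}; both resolve the same final technical point in the same way.
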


In other words, the tunnels $\tau_1, \dots, \tau_{n-1}$ have geodesic
representatives which come arbitrarily close to self--intersecting.
Although the proof does not guarantee that these tunnels do
self--intersect, it gives evidence that there exist tunnels which are
not isotopic to geodesics.

\begin{proof}
Let $M$ be the manifold of Lemma \ref{lemma:UnfilledPants}, say with
$\epsilon$ replaced by $\epsilon/4$ in that lemma.  By Lemma
\ref{lemma:DehnTwist}, Dehn filling the cusps of $M$ corresponding to
the pants curves of $\widehat{M}$ along slopes $\mu_j + k_j\,\lambda_j$
will yield a manifold with a genus--$(n+1)$ Heegaard splitting.

By work of Thurston \cite{Thurston}, as $k_j$ approaches infinity, the
Dehn filling approaches the manifold $M$ in the Gromov--Hausdorff
topology.  Even more precisely, work of Brock and Bromberg
\cite{brock-bromberg:density} implies that for any $\epsilon_1>0$, if
$k_j$ is large enough, there is $(1+\epsilon_1)$--bilipschitz
diffeomorphism $\phi$ from the complement of a Margulis tube about the
unfilled cusp to the complement of a Margulis tube about the core of
the filled solid torus.  We may take these Margulis tubes to avoid the
tunnels of our tunnel system.  Moreover, $\phi$ is level preserving in
the unfilled cusp.  Hence for large enough $k_j$, we obtain the
desired result.
\end{proof}

\subsection{Tunnel number ${\mathbf n}$}\label{sec:TunnelNumbern}

It would be nice to add to the conclusions of Theorem
\ref{thm:close-to-intersecting} that $M$ is tunnel number $n$, and not
some lower tunnel number.  Since for a manifold $M$ with one torus
boundary component, a system of $n$ tunnels corresponds to a genus
$n+1$ Heegaard splitting, we can prove that our manifold is tunnel
number $n$ by showing there are no lower genus Heegaard splittings of
$M$.  If the Hempel distance of the Heegaard splitting is high, then
work of Scharlemann and Tomova \cite{ScharlemannTomova} will imply
that there are no lower genus Heegaard splittings.  Hence in this
section we will review Hempel distance and other results on the curve
complex that will allow us to conclude our manifold is tunnel number
$n$.  It should be noted that the final step in this procedure relies
on announced work of Maher and Schleimer \cite{MaherSchleimer}, which
as of yet has not appeared.  Hence we include the result in a separate
section.

\begin{define}\label{def:CurveComplex}
Let $S$ be a closed, oriented, connected surface.  The \emph{curve
complex} $\mathcal{C}(S)$ is the simplicial complex whose vertices are
isotopy classes of essential curves in $S$, and a collection of $k+1$
vertices form a $k$--simplex whenever the corresponding curves can be
realized by disjoint curves on $S$.  For $\alpha$, $\beta$ vertices in
$C(S)$, we define the distance $d_S(\alpha, \beta)$ to be the minimal
number of edges in any path in the 1--skeleton of $\mathcal{C}(S)$
between $\alpha$ and $\beta$. 

The \emph{disk set} of a compression body with outer boundary
homeomorphic to $S$ is defined to consist of vertices in
$\mathcal{C}(S)$ which are the boundaries of essential disks in the
compression body.  Similarly, the disk set of a handlebody with
boundary $S$ consists of vertices of $\mathcal{C}(S)$ which are
boundaries of essential disks in the handlebody.  Note the disk set of
a compression body with outer boundary $S$ is contained in the disk
set of a handlebody with boundary $S$. 
\end{define}

\begin{define}\label{def:hempel}
A Heegaard splitting of a 3--manifold along a surface $S$ has two disk
sets, one on either side of $S$.  The \emph{Hempel distance} of the
Heegaard splitting is defined to be the minimal distance in
$\mathcal{C}(S)$ between those disk sets.  See \cite{hempel:dist}.

More generally, we define the \emph{inner distance} between sets $A$
and $B$ in $\mathcal{C}(S)$ to be
$$d(A,B) = \inf{ d_S(a, b) \mid a\in A, b\in B }.$$
Thus the Hempel distance is the inner distance between the disk sets
on either side of a Heegaard surface.
\end{define}

Scharlemann and Tomova showed that if the Hempel distance of a genus
$g$ Heegaard splitting is strictly greater than $2g$, then the
manifold will have a unique Heegaard splitting of genus $g$ and no
Heegaard splittings of smaller genus \cite{ScharlemannTomova}.

\begin{define}\label{def:handlebody-complex}
For any closed, oriented, connected surface $S$, let the handlebody
graph $\mathcal{H}(S)$ be the graph which has a vertex for each
handlebody with boundary $S$.  Since any handlebody has an associated
disk set, alternately we may think of $\mathcal{H}(S)$ as having
vertices corresponding to disk sets in $\mathcal{C}(S)$.  There is an
edge in $\mathcal{H}(S)$ beween two handlebodies whose disk sets
intersect in the curve complex $\mathcal{C}(S)$.  The distance
$d_H(x,y)$ between any two handlebodies in $\mathcal{H}(S)$ is defined
to be the minimal number of edges in a path between them in
$\mathcal{H}(S)$.
\end{define}

There is a relation $D\co \mathcal{H}(S) \to \mathcal{C}(S)$ defined
as follows.  For a handlebody $V \in \mathcal{H}(S)$, $D(V)$ consists
of the disk set of $V$.  The following lemma was pointed out to us by
S.~Schleimer.

\begin{lemma}\label{lemma:half-lipschitz}
For any handlebodies $V$ and $W$ in $\mathcal{H}(S)$,
$$d_H(V,W) \leq d( D(V), D(W)) + 1.$$
(The right hand side is inner distance in the curve complex.)
\end{lemma}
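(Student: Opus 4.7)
My plan is to travel from $V$ to $W$ along a geodesic in $\mathcal{C}(S)$ realizing the inner distance, inserting a single ``bridging'' handlebody between each pair of consecutive curves on that geodesic. To set up, let $k = d(D(V), D(W))$, pick $a \in D(V)$ and $b \in D(W)$ with $d_S(a, b) = k$, and fix a geodesic $a = c_0, c_1, \dots, c_k = b$ in the $1$-skeleton of $\mathcal{C}(S)$, so that consecutive $c_i, c_{i+1}$ are disjoint essential simple closed curves on $S$.

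The heart of the argument will be the following auxiliary claim: for any two disjoint essential simple closed curves $\gamma, \gamma'$ on $S$, there is a handlebody $H$ with $\partial H = S$ such that both $\gamma$ and $\gamma'$ lie in $D(H)$. Granting this, I would choose a bridging handlebody $H_i$ with $c_i, c_{i+1} \in D(H_i)$ for each $i \in \{0, 1, \dots, k-1\}$. Consecutive terms of the sequence $V, H_0, H_1, \dots, H_{k-1}, W$ then share a common element of their disk sets---namely $c_0 \in D(V) \cap D(H_0)$, $c_{i+1} \in D(H_i) \cap D(H_{i+1})$ for $0 \le i \le k-2$, and $c_k \in D(H_{k-1}) \cap D(W)$---so this gives a path of length $k + 1$ in $\mathcal{H}(S)$, yielding $d_H(V, W) \le k + 1$. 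The ``$+1$'' absorbs the cost of entering and leaving the chain of bridges at the endpoints $V$ and $W$.

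For the auxiliary claim, I would construct $H$ topologically: begin with $S \times [0, 1]$ and attach three-dimensional $2$-handles along $\gamma \times \{1\}$ and $\gamma' \times \{1\}$. The resulting manifold $X$ is a compression body with positive boundary $S \times \{0\} \cong S$ and negative boundary the closed surface $S''$ obtained from $S$ by surgery on $\gamma$ and $\gamma'$ (capping any resulting sphere components with $3$-balls). Inside $X$, each of $\gamma \times \{0\}$ and $\gamma' \times \{0\}$ bounds an embedded essential disk formed by the annulus $\gamma \times [0, 1]$ (resp.\ $\gamma' \times [0, 1]$) capped by the core disk of the corresponding $2$-handle. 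Letting $H''$ be a disjoint union of handlebodies with $\partial H'' = S''$, one component per component of $S''$, I set $H = X \cup_{S''} H''$; by construction $\partial H = S$ and $\gamma, \gamma' \in D(H)$.

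The step I expect to be most delicate is verifying that $H$ really is a handlebody. Viewing $X$ from the opposite side, the cocores of the two $2$-handles above become $1$-handles attached to $S'' \times \{1\}$, giving the dual description $X \cong S'' \times [0, 1] \cup (\text{two } 1\text{-handles})$. Hence $H = X \cup_{S''} H''$ is obtained by gluing $H''$ to the $S'' \times \{0\}$ end---which merely adjoins a collar to $H''$ and yields a space homeomorphic to $H''$---and then attaching the two $1$-handles. Since attaching a $1$-handle to a (possibly disconnected) handlebody either increases the genus of a component or joins two components into a higher-genus handlebody, the final manifold $H$ is a handlebody with boundary $S$, as needed. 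The key insight required is precisely this dual compression-body description; once it is in hand the verification is routine.
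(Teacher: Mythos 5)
Your proof is correct and follows essentially the same route as the paper: pick curves realizing the inner distance, take a geodesic between them in $\mathcal{C}(S)$, and bridge each pair of consecutive vertices by a handlebody whose disk set contains both, producing a path of length $k+1$ in $\mathcal{H}(S)$. The only difference is that you supply an explicit compression-body construction of the bridging handlebody for two disjoint essential curves, a standard fact that the paper's proof simply asserts.
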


\begin{proof}
Given handlebodies $V$ and $W$, let $\alpha \in D(V)$ and $\beta \in
D(W)$, with $d_S(\alpha, \beta) = k$.  Take a minimal length path in
$\mathcal{C}(S)$ between $\alpha$ and $\beta$, and denote the vertices
of the path by $\alpha= \alpha_0, \alpha_1, \dots, \alpha_k=\beta$.
Note that for any $i = 1, 2, \dots, k$, there exists a handlebody
which we denote $V_i$ such that $\alpha_{i-1}$ and $\alpha_i$ both
bound essential disks in $V_i$.  Then we obtain a sequence of
handlebodies $V_1, V_2, \dots, V_k$ with $d_H(V_i, V_{i+1}) = 1$.
Since $d_H(V, V_1) = 1 = d_H(V_k, W)$, the sequence of handlebodies
$V, V_1, \dots, V_k, W$ gives a path from $V$ to $W$ of length $k+1$
in $\mathcal{H}(S)$.  Hence $d_H(V,W) \leq d_S(\alpha, \beta) + 1$.
Since $\alpha$ and $\beta$ were arbitrary,
$$d_H(V,W)-1 \leq \inf{ d_S(\alpha,\beta) \mid \alpha \in D(V), \beta
\in D(W)} = d( D(V), D(W)),$$ as desired.
\end{proof}

Maher and Schleimer have proved that $\mathcal{H}(S)$ has infinite
diameter \cite{MaherSchleimer}.  We will use this to prove the
following strengthened version of Theorem
\ref{thm:close-to-intersecting}.

\begin{theorem}\label{thm:Main-TunnelNumN}
For any $\epsilon>0$ and any integer $n\geq 2$, there exists a finite
volume hyperbolic tunnel number $n$ manifold $M$ with a single cusp
torus such that $M$ has the following property.  It admits a system of
tunnels $\{\tau_1, \dots, \tau_n\}$ and a collection of balls
$B_1(\epsilon), \dots , B_{n-1}(\epsilon)$ of radius $\epsilon$ such
that for $i=1, \dots, n-1$, the geodesic arc homotopic to $\tau_i$
intersects $B_i(\epsilon)$ in two distinct arcs.
\end{theorem}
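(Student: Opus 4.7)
The plan is to refine the construction underlying Theorem \ref{thm:close-to-intersecting} so that the resulting Dehn filled manifold has Hempel distance exceeding $2(n+1)$ for the natural genus $(n+1)$ Heegaard splitting along $S$. Once this is arranged, the theorem of Scharlemann and Tomova \cite{ScharlemannTomova} forces $S$ to be the unique minimal genus Heegaard surface of the filled manifold, so its tunnel number is exactly $n$. The near self--intersection property is automatic for deep enough fillings by Thurston's hyperbolic Dehn surgery theorem and the Brock--Bromberg bilipschitz control used in the proof of Theorem \ref{thm:close-to-intersecting}, so everything reduces to producing large Hempel distance.

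To do so I would work in the handlebody graph $\mathcal{H}(S)$ of Definition \ref{def:handlebody-complex}. The compression body side of $S$ contributes a fixed vertex $V_C \in \mathcal{H}(S)$ with disk set $D(V_C)$ independent of the filling parameters. By Lemma \ref{lemma:DehnTwist}, filling along the slopes $\mu_j+k_j\lambda_j$ for $j=1,\dots,3n$ corresponds to precomposing the handlebody gluing by the product of Dehn twists $\prod_j T_{\lambda_j}^{k_j}$ along the pants curves, yielding a new handlebody vertex $V_k \in \mathcal{H}(S)$. Lemma \ref{lemma:half-lipschitz} then gives
\[
d(D(V_C), D(V_k)) \geq d_H(V_C, V_k) - 1,
\]
so it suffices to find $k_1,\dots,k_{3n}$ with $d_H(V_C, V_k) > 2(n+1)+1$.

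The main obstacle is that the Maher--Schleimer theorem \cite{MaherSchleimer} asserts only infinite diameter of $\mathcal{H}(S)$, whereas $V_k$ is confined to the orbit of the starting handlebody under the abelian subgroup $\langle T_{\lambda_1},\dots,T_{\lambda_{3n}}\rangle$ of the mapping class group of $S$. The plan is to verify that this restricted orbit is itself unbounded in $\mathcal{H}(S)$; the basic intuition is that since the pants decomposition $P$ fills $S$, iterated Dehn twists along its components act on the disk set of the handlebody in a nontrivial way and should carry it arbitrarily far. One has additional flexibility in Lemma \ref{lemma:DehnTwist} coming from the choice of longitudes $\mu_j$, which can be used to route a path in $\mathcal{H}(S)$ toward a distant vertex provided by Maher--Schleimer. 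Once unboundedness is established, pick the $k_j$ simultaneously large enough to force $d_H(V_C, V_k) > 2(n+1)+1$ and deep enough that the Brock--Bromberg bilipschitz diffeomorphism preserves the two arcs of each $\tau_i$ through $B_i(\epsilon)$; this produces the desired manifold.
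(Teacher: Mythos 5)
There is a genuine gap, and it sits exactly at the step you flag as the ``main obstacle.'' Your plan requires the orbit of a fixed handlebody (or disk set) under the abelian group of multitwists along the pants decomposition $P$ to be unbounded in $\mathcal{H}(S)$, so that some choice of $k_1,\dots,k_{3n}$ pushes $V_k$ far from $V_C$. This is false. Each Dehn twist along a curve of $P$ is an isometry of $\mathcal{C}(S)$ fixing $P$ pointwise, so for any disk set $D$ and any multitwist $T$ along $P$ one has $d(T(D),P)=d(D,P)=:K$. Hence any two elements of the orbit have inner distance at most $2K$ from each other (via the triangle inequality through $P$), and by Lemma \ref{lemma:half-lipschitz} the corresponding set of handlebodies has diameter at most $2K+1$ in $\mathcal{H}(S)$. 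The intuition that ``$P$ fills $S$, so iterated twists carry the disk set arbitrarily far'' measures distance in the wrong complex: high powers of these twists do move curves far in $\mathcal{C}(S)$ generically, but the disk sets in question never leave a bounded neighborhood of the simplex $P$, which is all that matters here. The extra flexibility in the choice of the meridians $\mu_j$ does not help either: replacing $\mu_j$ by $\mu_j+m\lambda_j$ merely reindexes the same one--parameter family of fillings, so the set of gluings you can reach is the same twist orbit.

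The paper's proof exploits this boundedness in the opposite direction. It forms the set $X\subset\mathcal{H}(S)$ of all handlebodies whose disk sets contain some multitwist image $T(D(C_0))$, proves $X$ has diameter at most $2K+1$ by the argument above, and then uses the Maher--Schleimer theorem that $\mathcal{H}(S)$ has infinite diameter to choose the \emph{handlebody side} $Y$ at distance at least $2n+3$ from all of $X$, equipping $Y$ with a maximally cusped structure pinching $P$ (Thurston uniformization) before gluing. Since every Dehn filling along $\mu_j+k\lambda_j$ fixes $D(Y)$ and moves $D(C_0)$ only by a multitwist, Lemma \ref{lemma:half-lipschitz} gives Hempel distance at least $N-1>2n+2$ for \emph{every} such filling, and Scharlemann--Tomova then rules out lower genus splittings. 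So the fix is not to make your orbit unbounded (it cannot be), but to choose the other gluing piece far from the entire bounded orbit before filling; the rest of your outline (Brock--Bromberg control of the geodesics and the $\epsilon$--balls for deep fillings) matches the paper and is fine.
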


The difference between this theorem and Theorem
\ref{thm:close-to-intersecting} is that here we may conclude that our
manifold is tunnel number $n$, while there we just have a system of
$n$ tunnels.

\begin{proof}
As in the proof of Theorem \ref{thm:close-to-intersecting}, we will
start with the maximally cusped compression body of Lemma
\ref{lemma:MaximallyCusped}, attach to it a maximally cusped
handlebody, and Dehn fill in such a way that the resulting manifold is
geometrically close to the original.  However, we will choose the
maximally cusped structure on our handlebody more carefully, to ensure
that after Dehn filling, the Hempel distance of the resulting Heegaard
splitting remains high.

We first set up notation.  Let $C_0$ be the maximally cusped structure
on the $(1; n+1)$--compression body from Lemma
\ref{lemma:MaximallyCusped}.  Let $S$ denote the positive boundary of
this compression body.  Let $D(C_0)$ denote the disk set of $C_0$ in
$\mathcal{C}(S)$.  The pinched curves on $S$ corresponding to the
rank--1 cusps of $C_0$ form a maximal simplex $P$ in $\mathcal{C}(S)$.

Notice that there is a relation $h$ from disk sets of a compression
body with outer boundary $S$ to the handlebody complex
$\mathcal{H}(S)$, as follows.  For a disk set $D$ of a compression
body, $h(D)$ consists of all $V \in \mathcal{H}(S)$ for which $D$ is a
subset of $D(V)$.  So in particular, $h(D(C_0))$ is a subset of
$\mathcal{H}(S)$.

Recall that a multitwist along $P$ is a collection of Dehn twists, one
along each curve of $P$.  Let $X = \{ h( T(D(C_0))) \mid T \mbox{ is a
  multitwist along } P\}$.  That is, $X$ is the subset of
$\mathcal{H}(S)$ consisting of all handlebodies whose disk sets
contain $T(D(C_0))$ for some multitwist $T$.

Now for the next step of the proof, we show that $X$ has bounded
diameter in $\mathcal{H}(S)$.

First, recall that a Dehn twist along any curve in $P$ is an isometry
of $\mathcal{C}(S)$, fixing $P$ pointwise.  Thus, if we let $K$ denote
the inner distance between $P$ and $D(C_0)$ in $\mathcal{C}(S)$, and
if we let $T$ be any multitwist along $P$, then $T(D(C_0))$ has inner
distance $K$ from $P$.

Now, for any $V, W \in X$, by definition of $X$ there exist
multitwists $T_1$ and $T_2$ such that $T_1(D(C_0)) \subset D(V)$ and
$T_2(D(C_0)) \subset D(W)$.  Then the inner distance $d(D(V), D(W))$
satisfies
\begin{eqnarray*}
d(D(V), D(W)) &\leq & d(D(V), P) + d(P, D(W)) \\
&\leq&  d(T_1(D(C_0)), P) + d(P, T_2(D(C_0))) = 2K.
\end{eqnarray*}

By Lemma \ref{lemma:half-lipschitz}, $d_H(V,W) \leq 2K+1$.  Hence $X$
has bounded diameter in $\mathcal{H}(S)$.

We are finally ready to choose the maximally cusped structure on our
handlebody.  Since the diameter of $\mathcal{H}(S)$ is infinite
\cite{MaherSchleimer}, we may choose a handlebody $Y$ in
$\mathcal{H}(S)$ such that $\inf{ d_H(Y, V) \mid V \in X } = N$, where
$N$ is some number, at least $2n+3$.  Let $H_0$ be a maximally cusped
structure on $Y$ with the curves $P$ on $\partial Y$ pinched to
rank--1 cusps.  The fact that such a structure exists follows as a
consequence of Thurston's Uniformization Theorem (see Morgan
\cite{Morgan:uniformization}). 

As in the proof of Theorem \ref{thm:close-to-intersecting}, glue $H_0$
to $C_0$ by isometry, and then Dehn fill $P$ along slopes of the form
$\mu_i + k \,\lambda_i$.  Any such Dehn filling along $P$ fixes the
disk set of $Y$, and modifies the disk set of $C_0$ by applying
multitwist along $P$.  By choice of $Y$ and Lemma
\ref{lemma:half-lipschitz}, any such Dehn filling will yield a
manifold with large Hempel distance, larger than $N-1>2n+2$.  Then
work of Scharlemann and Tomova implies that the minimal genus Heegaard
splitting must have genus at least $(n+1)$, which means that the
manifold is tunnel number $n$.

On the other hand, the same proof as that of Theorem
\ref{thm:close-to-intersecting} applies to show that $n-1$ tunnels are
arbitrarily close to self--intersecting.
\end{proof}

\bibliographystyle{hamsplain}
\bibliography{references}
\end{document}